\documentclass[a4paper,11pt]{article}
 \pagestyle{plain}
 \setlength{\oddsidemargin}{12pt}
 \setlength{\evensidemargin}{12pt}
 \setlength{\topmargin}{0pt}
 \setlength{\textwidth}{15cm}
 \setlength{\textheight}{21.5cm}
 \setlength{\parindent}{0.5cm}
 \setlength{\parskip}{1ex plus 0.2ex minus0.2ex}


 \usepackage[plainpages=false]{hyperref}
 \usepackage{amsfonts,amsmath,amssymb,amsthm}
 \usepackage{latexsym,lscape,rawfonts,mathrsfs}

 \usepackage[dvips]{color}
 \usepackage{multicol}


 \usepackage[all]{xy}
 \usepackage{eufrak}
 \usepackage{makeidx}
 \usepackage{graphicx,psfrag}

 \usepackage{array,tabularx}

 \usepackage{setspace}

 \usepackage{appendix}


\usepackage{pdfsync}

\usepackage{txfonts}



 \newcommand{\D}[2]{\ensuremath{ \frac{\partial{#1}}{\partial{#2}}}}

 \newcommand{\R}{\ensuremath{\mathbb{R}}}
 \newcommand{\Z}{\ensuremath{\mathbb{Z}}}

 \newcommand{\ba}{\begin{align*}}
 \newcommand{\ea}{\end{align*}}


 \DeclareMathOperator{\Vol}{Vol}
 \DeclareMathOperator{\diam}{diam}


 \newcommand{\norm}[2]{{ \ensuremath{\|} #1 \ensuremath{\|}}_{#2}}
 \newcommand{\snorm}[2]{{ \ensuremath{\left |} #1 \ensuremath{\right |}}_{#2}}

 \makeatletter
 \def\ExtendSymbol#1#2#3#4#5{\ext@arrow 0099{\arrowfill@#1#2#3}{#4}{#5}}
 
 \makeatother

 \makeatletter
 \def\ExtendSymbol#1#2#3#4#5{\ext@arrow 0099{\arrowfill@#1#2#3}{#4}{#5}}
 
 \makeatother

 \definecolor{hao}{rgb}{1,0.5,0}
 \definecolor{miao}{cmyk}{0.5,0,0.2,0.2}
 \definecolor{qiao}{gray}{0.96}


 \newtheorem{claim}{Claim}

 \newtheorem{proposition}{Proposition}[section]
 \newtheorem{lemma}{Lemma}[section]

 \newtheorem{remark}{Remark}[section]

 \newtheorem{theoremin}{Theorem}
 
 \newtheorem{corollaryin}{Corollary}
 
 \newtheorem{remarkin}{Remark}

 \title{On the conditions to extend Ricci flow(III)}
 \author{Xiuxiong Chen\footnote{Supported by NSF grant DMS-0907778 and funds from Simons Center for Geometry and Physics.}\;,  Bing Wang\footnote{Supported by NSF grant DMS-1006518.}}
 \date{}

 \begin{document}
 \maketitle


\section{Introduction}

 In this paper, we develop some estimates for the Ricci flow. Some of them are the improved versions of the estimates in~\cite{BWa2}, some of them
 are purely new. These estimates are useful in the study of Ricci flow with bounded scalar curvature.
 In particular, they have applications in the extension problem of Ricci flow and the convergence of the K\"ahler Ricci flow. \\

 Suppose $\{(X^m, g(t)), t \in I \subset \R \}$ is a Ricci flow solution on a complete manifold $X^m$. Following the notations of~\cite{BWa2}, we define
\begin{align*}
  O_g(t)=\sup_X |R|_{g(t)}, \quad P_g(t)=\sup_X |Ric|_{g(t)}, \quad Q_g(t)=\sup_X |Rm|_{g(t)}.
\end{align*}
If the flow $g$ is obvious in the content, we will drop the subindex $g$.
We first state the improvement of the estimates in~\cite{BWa2}. \\

\begin{theoremin}[Riemannian curvature ratio estimate]
  There exists a constant $\epsilon_0=\epsilon_0(m)$ with the following properties.

  Suppose $K \geq 0$, $\left\{ (X, g(t)), -\frac{1}{8} \leq t \leq K \right\}$ is a Ricci flow solution on a complete manifold $X^m$,
  $Q(0)=1$, and $Q(t)\leq 2$ for every $t \in [-\frac{1}{8}, 0]$.  Suppose $T$ is the first time such that $Q(T)=2$, then there exists a
  point $x \in X$ and a nonzero vector $V \in T_x X$ such that
\begin{align}
  \left|  \log \frac{\langle V, V\rangle_{g(0)}}{\langle V, V\rangle_{g(T)}} \right| >\epsilon_0.
  \label{eqn:g24_1}
\end{align}
In particular, we have
  \begin{align}
     \int_0^T P(t)dt>\epsilon_0.
  \label{eqn:g22_2}
  \end{align}
  Consequently, we have
  \begin{align}
    Q(K)< 2^{\frac{\int_0^K P(t)dt}{\epsilon_0}+1}.
  \label{eqn:b23_10}
  \end{align}
  \label{thmin:c9_1}
\end{theoremin}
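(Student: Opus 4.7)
My plan is to prove \eqref{eqn:g24_1} as the core estimate and derive \eqref{eqn:g22_2} and \eqref{eqn:b23_10} as consequences. For \eqref{eqn:g24_1} I would argue by contradiction and Hamilton's Cheeger--Gromov compactness. Assume no dimensional constant $\epsilon_0 > 0$ works; then one extracts a sequence of flows $(X_n^m, g_n(t))_{t \in [-1/8, T_n]}$ satisfying the hypotheses with $\sup_{x, V} |\log(\langle V, V\rangle_{g_n(0)}/\langle V, V\rangle_{g_n(T_n)})| \leq 1/n$. The maximum principle for $|Rm|^2$ yields the ODE $Q'(t) \leq C(m) Q(t)^2$, so the jump from $Q(0) = 1$ to $Q(T_n) = 2$ forces $T_n \geq 1/(2C(m)) > 0$; after passing to a subsequence, $T_n \to T_\infty \in (0, \infty]$. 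Shi's local derivative estimates on the window $[-1/8, T_n]$ provide uniform bounds $|\nabla^k Rm|_{g_n(t)} \leq C_k(m)$ for $t \geq -1/16$. Centering at $y_n \in X_n$ realizing $|Rm|_{g_n(T_n)}(y_n) > 2 - 1/n$ (where $\kappa$-noncollapsing holds at the high curvature scale), Hamilton's compactness produces a smooth pointed limit $(X_\infty, g_\infty(t), y_\infty)$. The pointwise closeness assumption passes to the limit as $g_\infty(0) = g_\infty(T_\infty)$; since the Riemann tensor at a time slice is determined algebraically by the metric, we conclude $|Rm|_{g_\infty(T_\infty)}(y_\infty) = |Rm|_{g_\infty(0)}(y_\infty) = 2$, contradicting $Q_{g_\infty}(0) \leq \liminf Q_{g_n}(0) = 1$. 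The case $T_\infty = \infty$ is accommodated by first recentering time at $T_n$ and treating the analogous statement.

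Next, \eqref{eqn:g22_2} follows by integrating the pointwise evolution
$$\partial_t \log \langle V, V\rangle_{g(t)} = -\frac{2\,Ric(V, V)}{\langle V, V\rangle_{g(t)}},$$
which gives $|\log(\langle V, V\rangle_{g(0)}/\langle V, V\rangle_{g(T)})| \leq 2 \int_0^T P(t)\, dt$; combined with \eqref{eqn:g24_1}, this yields \eqref{eqn:g22_2} after absorbing a factor of $2$ into $\epsilon_0$. For \eqref{eqn:b23_10} I would iterate \eqref{eqn:g22_2} under parabolic rescaling. Set $T_0 = 0$ and define $T_{i+1}$ inductively as the first $t > T_i$ with $Q(t) = 2^{i+1}$; rescale $\tilde g_i(s) := Q(T_i)\, g(T_i + s/Q(T_i))$. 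The hypotheses of \eqref{eqn:g22_2} apply to each $\tilde g_i$ because the successive backward windows concatenate (using $T_{i+1} - T_i \geq c(m)/Q(T_i)$ from $Q' \leq C Q^2$, after mild adjustment of the numerical constants $1/8$ and $2$ if necessary). Scale-invariance of $\int P\, dt$ under parabolic rescaling then gives $\int_{T_i}^{T_{i+1}} P\, dt > \epsilon_0$. Summing over the $N$ doublings attained in $[0, K]$ yields $\int_0^K P\, dt > N \epsilon_0$ while $Q(K) < 2^{N+1}$, which rearranges to \eqref{eqn:b23_10}.

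The main obstacle is \eqref{eqn:g24_1}: Hamilton's Cheeger--Gromov compactness requires a non-collapsing input at the high-curvature basepoint $y_n$, and the limiting argument must treat the case $T_\infty = \infty$. An alternative route would avoid compactness entirely by combining the assumed $C^0$-closeness of $g_n(0)$ and $g_n(T_n)$ with Shi's uniform $C^k$ bounds and interpolation to deduce $C^0$-closeness of the Riemann tensor, directly contradicting $Q(T_n) = 2 Q(0)$; this trades the compactness step for delicate harmonic-coordinates estimates on a possibly noncompact manifold.
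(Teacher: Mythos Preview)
Your overall architecture matches the paper's: prove the metric gap \eqref{eqn:g24_1} by contradiction and compactness, derive \eqref{eqn:g22_2} from the Ricci-flow evolution of $\langle V,V\rangle$, and obtain \eqref{eqn:b23_10} by iterating over curvature-doubling times via parabolic rescaling. Your deductions of \eqref{eqn:g22_2} and \eqref{eqn:b23_10} are essentially identical to the paper's.

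The substantive gap is precisely the one you flag: Hamilton's Cheeger--Gromov compactness requires an injectivity-radius (equivalently, noncollapsing) lower bound at the basepoint $y_n$, and nothing in the hypotheses supplies one. Your parenthetical ``where $\kappa$-noncollapsing holds at the high curvature scale'' is unjustified --- the flows $(X_n,g_n)$ may collapse. Your alternative route via interpolation and harmonic coordinates points in the right direction but is left undeveloped.

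The paper fills this gap with a local pullback trick that manufactures injectivity radius from the curvature bound alone. It first recasts the statement as a comparison of two Ricci flows $g$ and $h$ on $[-\tfrac18,0]$ with $C^0$-close time-zero slices (Proposition~\ref{prn:g22_1}); specializing to $h(t)=g(t+T)$ recovers \eqref{eqn:g24_1} and, incidentally, dissolves your $T_\infty=\infty$ case, since everything happens at a fixed time. At a point $z_i$ realizing the curvature discrepancy, both metrics are pulled back to a Euclidean ball $S\subset T_{z_i}X_i$ via the $g_i(0)$-exponential map. Because the \emph{conjugate} radius depends only on the bound $|Rm|\le 2$, this map is a local diffeomorphism on $S$ and the pulled-back metric $\tilde g_i(0)$ automatically satisfies $inj_{\tilde g_i(0)}(o)\ge 2c_0(m)$. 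Volume comparison plus the assumed $C^0$-equivalence then gives a volume lower bound for $\tilde h_i(0)$, and Cheeger--Gromov--Taylor converts this into $inj_{\tilde h_i(0)}(o)\ge c_1(m)$. With Shi's estimates providing uniform $C^k$ control, Cheeger--Gromov compactness on $S$ yields smooth limits $(S',g')$ and $(S'',g'')$, and the identity maps converge (by the $C^0$-closeness) to a metric isometry $F_\infty:(S',g')\to(S'',g'')$. The Calabi--Hartman theorem then upgrades $F_\infty$ to a smooth isometry, forcing $|Rm|_{g'}(o')=|Rm|_{g''}(o'')$ and contradicting the curvature gap at $z_i$.

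One minor correction: the Riemann tensor is not ``determined algebraically by the metric'' --- it involves second derivatives. Your limiting conclusion survives (equal smooth metrics have equal curvature), but the phrasing is wrong and would not support the interpolation-based alternative you sketch without further work.
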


\begin{theoremin}[Ricci curvature estimate]
  Suppose $\left\{ (X, g(t)), -\frac{1}{8} \leq t \leq 0 \right\}$ is a Ricci flow solution satisfying  the following properties.
  \begin{itemize}
    \item $X$ is a complete manifold of dimension $m$.
    \item $|Rm|_{g(t)}(x) \leq 2$ whenever $x \in B_{g(0)}(x_0, 1), \; t \in \left[-\frac{1}{8}, 0 \right]$.
   \end{itemize}
  Then there exists a large constant $A=A(m)$ such that
  \begin{align}
    \sup_{B_{g(0)}\left(x_0, \frac{1}{2} \right) \times \left[-\frac{1}{16}, 0 \right]} |Ric|
    \leq A \norm{R}{L^{\infty}\left(B_{g(0)}(x_0, 1) \times \left[-\frac{1}{8}, 0 \right] \right)}^{\frac{1}{2}}.
    \label{eqn:ricrhalf}
  \end{align}
  \label{thmin:c9_2}
\end{theoremin}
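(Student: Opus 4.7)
The plan is to bound $|Ric|$ pointwise in two stages: first obtain an integral bound on $|Ric|^{2}$ via the trace of the Ricci flow equation,
\[
 \partial_{t}R \;=\; \Delta R \,+\, 2|Ric|^{2},
\]
and then upgrade to an $L^{\infty}$ bound by a parabolic mean-value inequality applied to $|Ric|^{2}$. I favour this route over a blow-up contradiction because, if one tries to normalise a hypothetical bad point by $\tilde g_{k}=\rho_{k}g_{k}$ with $\rho_{k}:=|Ric|(y_{k},s_{k})\to 0$, the ambient metric shrinks and pushes the ball of radius $1$ (where the curvature hypothesis lives) out of the rescaled domain.

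For the integral step, pick a cutoff $\phi(x,t)$ supported in $B_{g(0)}(x_{0},1)\times[-\tfrac{1}{8},0]$, equal to $1$ on $B_{g(0)}(x_{0},\tfrac{3}{4})\times[-\tfrac{3}{32},0]$, vanishing near the parabolic boundary and with bounded spatial and temporal derivatives. Multiplying the trace identity by $\phi^{2}$ and integrating in spacetime, spatial integration by parts gives $\iint\phi^{2}\,\Delta R\,dV\,dt=\iint R\,\Delta(\phi^{2})\,dV\,dt$, and using $\partial_{t}\,dV=-R\,dV$ under Ricci flow the time term becomes $-\iint 2\phi\phi_{t}R\,dV\,dt+\iint\phi^{2}R^{2}\,dV\,dt$ (time boundary terms vanish by the choice of cutoff). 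Each of the resulting pieces is bounded by $C(m)\|R\|_{L^{\infty}}$, using also the uniform bound $|R|\le C(m)$ from $|Rm|\le 2$ and the Bishop-Gromov volume estimate for $B_{g(0)}(x_{0},1)$, so
\[
 \iint_{B_{g(0)}(x_{0},3/4)\times[-3/32,0]} |Ric|^{2}\,dV\,dt \;\le\; C(m)\,\|R\|_{L^{\infty}(B_{g(0)}(x_{0},1)\times[-1/8,0])}.
\]

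For the sup step, the standard Lichnerowicz-type evolution under Ricci flow,
$\partial_{t}|Ric|^{2}=\Delta|Ric|^{2}-2|\nabla Ric|^{2}+Q(Rm,Ric)$ with $Q$ cubic in the curvature, together with $|Rm|\le 2$, yields $(\partial_{t}-\Delta)|Ric|^{2}\le C(m)|Ric|^{2}$. Thus $u:=|Ric|^{2}$ is a non-negative subsolution of a heat-type equation with bounded potential, and a parabolic Moser mean-value inequality of the form $\sup u\le C\iint u\,dV\,dt$ on appropriately nested cylinders, combined with the integral bound, delivers $\sup|Ric|^{2}\le C\|R\|_{L^{\infty}}$, which is the claim with $A=A(m)$.

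The main technical obstacle is justifying the mean-value inequality under only a bounded-curvature hypothesis, since Moser iteration classically rests on a local Sobolev inequality, which in turn usually requires volume non-collapsing. Here one exploits that $|Rm|\le 2$ together with Shi's derivative estimates controls $|\nabla^{k}Rm|$ for every $k$ on interior subcylinders, and that the distortion bound $e^{-C(m)|t|}g(0)\le g(t)\le e^{C(m)|t|}g(0)$ (from $\partial_{t}g=-2Ric$ and $|Ric|\le C(m)$) makes all time slices uniformly bi-Lipschitz to $g(0)$; combined with the bounded-geometry Moser technology standard in the Ricci flow literature, this yields a purely dimension-dependent constant and hence $A=A(m)$.
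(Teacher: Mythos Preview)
Your two-step outline (integral bound on $|Ric|^{2}$ from the evolution of $R$, then parabolic Moser for the subsolution $|Ric|^{2}$) is exactly the engine behind Theorem~3.2 of \cite{BWa2}, which the present paper invokes as a black box. So the analytic core of your argument matches the literature. The gap is in your last paragraph, where you try to justify the mean-value inequality.

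You correctly identify that Moser iteration needs a local Sobolev inequality, hence volume non-collapsing. But your proposed cure---Shi's estimates plus metric equivalence between time slices---does not supply this. Bounded $|Rm|$ and all its covariant derivatives say nothing about collapsing: the flat product $S^{1}_{\epsilon}\times\R^{m-1}$ has $|Rm|\equiv 0$ yet unit balls of volume $\sim\epsilon$, so the Sobolev constant on $B(x_{0},1)$ can be made arbitrarily bad while all your listed quantities stay perfect. The ``bounded-geometry Moser technology'' you cite always carries an injectivity-radius or volume hypothesis; here there is none.

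The paper's key new idea is precisely the device that closes this gap. In Lemma~2.1 one pulls the flow back to $T_{x_{0}}X$ via the exponential map of $g(0)$. The conjugate radius is controlled by $|Rm|$ alone, so the lifted metric $\tilde g(0)$ has injectivity radius at the origin bounded below by a dimensional constant; Jacobi-field estimates then give a uniform volume lower bound, hence a uniform Sobolev constant $\sigma(m)$ on the lifted ball, and your Moser argument now runs there. After establishing Lemma~2.1 (which has a \emph{small}-curvature hypothesis $|Rm|\le\delta_{0}$ on a \emph{large} ball), the theorem follows by rescaling so that the original bound $|Rm|\le 2$ on a unit ball becomes, at scale $\delta_{0}^{2}$, the hypothesis of the lemma. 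In short: your plan is right once you insert the exponential-map lift to manufacture non-collapsing; without it the constant $A$ would depend on the geometry of $(X,g(0))$, not just on $m$.
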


The constants $\frac{1}{8}, \frac{1}{16}$ in Theorem~\ref{thmin:c9_1} and Theorem~\ref{thmin:c9_2} are chosen for the simplicity of applications.
They can be replaced by other constants $C, \frac{C}{2}$. Then of course $\epsilon_0=\epsilon_0(m, C)$, $A=A(m,C)$.
After we obtain these two universal dimensional constants $\epsilon_0$ and $A$, we can apply them
to improve the estimates of the constants in other theorems in~\cite{BWa2}. For simplicity, we will not list these improvements term by term.
Here we give some new applications.\\

\begin{corollaryin}[Ancient solution gap]
    Suppose $\left\{ (X^m, g(t)), -\infty <t \leq 0 \right\}$ is a non-flat, ancient Ricci flow solution such that
    $\displaystyle \limsup_{t \to -\infty} Q(t)|t|<\infty$.
    Then
    \begin{align}
       \limsup_{t \to -\infty} P(t)|t| \geq \frac{\epsilon_0}{\log 2}.
    \label{eqn:g25_2}
    \end{align}
    \label{clyin:g24_1}
\end{corollaryin}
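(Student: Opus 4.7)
The plan is to argue by contradiction and exploit the growth estimate \eqref{eqn:b23_10} of Theorem~\ref{thmin:c9_1}, rescaled about a carefully chosen backward sequence of times. Suppose the conclusion fails, so that $\limsup_{t \to -\infty} P(t)|t| = \alpha < \epsilon_0/\log 2$; fix $\alpha' \in (\alpha, \epsilon_0/\log 2)$ and $T_0 < 0$ with $P(t) < \alpha'/|t|$ for all $t \leq T_0$. Write $c := \limsup_{t \to -\infty} Q(t)|t|$, so $Q(t) \to 0$ as $t \to -\infty$. By non-flatness and forward uniqueness of Ricci flow from a flat metric, there exists $t^* \leq 0$ with $Q(t^*) > 0$; after a time translation we may assume $t^* = 0$, so $Q(0) > 0$.

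For each large $M > 0$, choose $\tau_M \in (-\infty, -M]$ attaining $\sup_{t \leq -M} Q(t)$ (the sup is realized because $Q \to 0$ at $-\infty$ reduces it to a sup over a compact interval, and it is strictly positive since otherwise $g$ would be flat on $(-\infty, -M]$, hence on $[-M,0]$ by forward uniqueness, contradicting $Q(0)>0$). Put $\lambda_M := Q(\tau_M)$ and $\tilde{g}(s) := \lambda_M\, g(\lambda_M^{-1}s + \tau_M)$. By the choice of $\tau_M$ we have $Q(t) \leq Q(\tau_M)$ for all $t \leq \tau_M$, so $\tilde{Q}(s) \leq 1$ on $[-1/8, 0]$ and the hypotheses of Theorem~\ref{thmin:c9_1} are satisfied. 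Setting $K_M := \lambda_M |\tau_M|$, which is bounded since $Q(t)|t| = O(1)$, the time $K_M$ corresponds to time $0$ in the original flow. Applying \eqref{eqn:b23_10} and unraveling the scaling yields
\[
Q(0) \;\leq\; Q(\tau_M)\cdot 2^{\frac{1}{\epsilon_0}\int_{\tau_M}^0 P(t)\,dt \,+\, 1}.
\]

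To derive the contradiction, split $\int_{\tau_M}^0 P\,dt = \int_{\tau_M}^{T_0} P\,dt + \int_{T_0}^0 P\,dt$; the latter is a fixed constant $C_0$, while $\int_{\tau_M}^{T_0} P\,dt \leq \alpha' \log(|\tau_M|/|T_0|)$. Combined with $Q(\tau_M) \leq (c + o(1))/|\tau_M|$ (which follows from the definition of $c$), this reduces the estimate to
\[
Q(0) \;\leq\; C\cdot |\tau_M|^{\alpha' \log 2/\epsilon_0 \,-\, 1}.
\]
Since $\alpha' < \epsilon_0/\log 2$, the exponent is strictly negative, so letting $M \to \infty$ forces $Q(0) = 0$, contradicting $Q(0) > 0$.

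The main obstacle is the point-picking step. A naive choice of $\tau_n \to -\infty$ will in general fail the backward curvature-doubling hypothesis $Q(t) \leq 2 Q(\tau_n)$ on $[\tau_n - 1/(8Q(\tau_n)), \tau_n]$ needed to apply Theorem~\ref{thmin:c9_1} after rescaling; when $Q(\tau_n)|\tau_n|$ drops well below $\limsup Q(t)|t|$, the interval becomes so long that the hypothesis breaks down. Taking $\tau_M$ to be the supremum of $Q$ over the backward ray sidesteps this difficulty automatically, and it is this selection mechanism that converts the local statement of Theorem~\ref{thmin:c9_1} into the global ancient-solution gap asserted by the corollary.
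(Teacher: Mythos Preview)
Your argument is correct and follows the same contradiction strategy as the paper: select a good backward time, rescale so that Theorem~\ref{thmin:c9_1} applies, and use \eqref{eqn:b23_10} together with the bound $\int P \lesssim \alpha' \log|\tau|$ to force a power of $|\tau|$ with negative exponent against a fixed positive quantity. The only noteworthy difference is the point-picking step: the paper invokes the standard iterative doubling selection to find $t_L$ with $Q(s)<2Q(t_L)$ on $[t_L-\tfrac{1}{8Q(t_L)},t_L]$, whereas you take $\tau_M$ to (nearly) maximize $Q$ over $(-\infty,-M]$, which yields the backward bound for free once $Q(t)\to 0$; both mechanisms serve the identical purpose here.
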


\begin{corollaryin}[Shrinking soliton gaps]
 Suppose $(X^m, g)$ is complete, non-flat Riemannian manifold.
 If $(X^m, g)$ satisfies the shrinking Ricci soliton equation
 \begin{align}
   Ric + \mathcal{L}_V g -\frac{g}{2}=0
   \label{eqnin:ssoliton}
 \end{align}
 for some vector field $V$, $\displaystyle \sup_X |Rm|<\infty$, then
 \begin{align}
   A\sqrt{\sup_X |Rm|} \cdot \sqrt{\sup_X |R|}  \geq \sup_X |Ric| \geq \frac{\epsilon_0}{\log 2}.
 \label{eqnin:ssolitongap}
 \end{align}
  \label{clyin:ssolitongap}
\end{corollaryin}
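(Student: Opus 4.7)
\medskip

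\noindent\textbf{Proof proposal for Corollary~\ref{clyin:ssolitongap}.}
The plan is to convert the static soliton into a self-similar ancient Ricci flow and then apply Corollary~\ref{clyin:g24_1} for the lower bound on $\sup|Ric|$, and Theorem~\ref{thmin:c9_2} (after a parabolic rescaling) for the upper bound.

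\emph{Step 1: Associated ancient flow.} Using the soliton equation $Ric+\mathcal{L}_{V}g-\tfrac{g}{2}=0$, let $\phi_{t}$ be the one-parameter family of diffeomorphisms generated by the time-dependent vector field $V/(1-t)$ (with $\phi_{0}=\mathrm{id}$, adjusting the coefficient of $V$ if needed to match the convention $\partial_{t}g(t)=-2Ric_{g(t)}$), and set
\[
  g(t):=(1-t)\,\phi_{t}^{*}g,\qquad t\in(-\infty,1).
\]
A direct computation shows $g(t)$ is a smooth complete Ricci flow, and it is non-flat because $g$ is. By scaling of curvature and diffeomorphism invariance,
\[
  |Rm|_{g(t)}(x)=\tfrac{1}{1-t}|Rm|_{g}(\phi_{t}(x)),
\]
and the analogous identities hold for $|Ric|$ and $|R|$. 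In particular $Q_{g(t)}=Q_{g}/(1-t)$, $P_{g(t)}=P_{g}/(1-t)$, $O_{g(t)}=O_{g}/(1-t)$.

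\emph{Step 2: Lower bound via ancient solution gap.} Since $\sup_{X}|Rm|<\infty$, we have $\limsup_{t\to-\infty}Q_{g(t)}|t|=Q_{g}<\infty$. Corollary~\ref{clyin:g24_1} then yields $\limsup_{t\to-\infty}P_{g(t)}|t|\geq\epsilon_{0}/\log 2$. But $P_{g(t)}|t|=P_{g}\cdot|t|/(1-t)\to P_{g}=\sup_{X}|Ric|$ as $t\to-\infty$, giving the right-hand inequality in \eqref{eqnin:ssolitongap}.

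\emph{Step 3: Upper bound via the Ricci estimate.} Fix any $x_{0}\in X$ and choose a large negative time $t_{0}$. Parabolically rescale by $\lambda^{2}:=Q_{g}/(1-t_{0})$ and set $\tilde g(s):=\lambda^{2}g(t_{0}+s/\lambda^{2})$ for $s\in[-\tfrac18,0]$. A short check using $|Rm|_{g(t)}\leq Q_{g}/(1-t)$ shows that $\sup_{X}|Rm|_{\tilde g(s)}\leq 1\leq 2$ uniformly for $s\in[-\tfrac18,0]$, so Theorem~\ref{thmin:c9_2} applies at the scale $\tilde g$ (indeed globally in space). Translating back: $|Ric|_{\tilde g(0)}(x_{0})=|Ric|_{g}(\phi_{t_{0}}(x_{0}))/Q_{g}$ and $\sup|R|_{\tilde g}\leq \sup_{X}|R|_{g}/Q_{g}$, so Theorem~\ref{thmin:c9_2} yields
\[
  |Ric|_{g}(\phi_{t_{0}}(x_{0}))\leq A\sqrt{\sup_{X}|Rm|_{g}}\cdot\sqrt{\sup_{X}|R|_{g}}.
\]
Since $\phi_{t_{0}}$ is a diffeomorphism of $X$ and $x_{0}$ was arbitrary, taking the sup over $X$ gives the left inequality of \eqref{eqnin:ssolitongap}.

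\emph{Main obstacle.} The conceptual content sits entirely in invoking the two previous results; the actual work is the bookkeeping of Step~3 — producing a \emph{uniform} $|Rm|\leq 2$ bound on a full parabolic neighborhood around $(x_{0},0)$ at the rescaled level and then untangling the three rescalings (metric scaling, diffeomorphism pullback, and time parametrization) so that the constant $A$ and the quantities $\sqrt{\sup|Rm|}$, $\sqrt{\sup|R|}$ come out correctly. One must also double-check the sign/coefficient convention in the soliton equation is consistent with the $\partial_{t}g=-2Ric$ normalization used to derive Theorem~\ref{thmin:c9_2}, which at worst requires absorbing a factor of $2$ into $V$.
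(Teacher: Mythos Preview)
Your proposal is correct and follows exactly the paper's approach: the paper's proof is the one sentence ``Corollary~\ref{clyin:ssolitongap} follows from Theorem~\ref{thmin:c9_2}, Corollary~\ref{clyin:g24_1}, and the fact that every shrinking soliton with bounded curvature is a type-I ancient solution,'' and your Steps~1--3 simply unpack those three ingredients. One incidental simplification: in Step~3 there is no need to take $t_{0}$ large negative---choosing $t_{0}=0$ (so $\phi_{0}=\mathrm{id}$ and $\lambda^{2}=Q_{g}$) already gives $|Rm|_{\tilde g(s)}\leq 1$ on $[-\tfrac18,0]$ and yields the inequality directly at $x_{0}$.
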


Inspired by the proof of Theorem~\ref{thmin:c9_1} and Theorem~\ref{thmin:c9_2}, we also study the volume ratio for Ricci flow with bounded scalar curvature
and equivalent metrics (c.f. Proposition~\ref{prn:g23_1} and Proposition~\ref{prn:g22_1}). It turns out that
the volume ratio upper bound can be obtained even if the metric equivalence is absent. Of course, some other conditions are required as price.

\begin{theoremin}[volume ratio estimate]
  Suppose $\left\{ (X^m, g(t)), -1 \leq t \leq 0 \right\}$ is a Ricci flow solution on a closed manifold $X^m$ with the following properties.
  \begin{itemize}
    \item $\mu(g(t), \theta) \geq -B$ for every $t\in [-1, 0]$, $\theta \in (0, -t]$.
    \item $\displaystyle  -\sigma_{-} \leq R \leq \sigma_{+}$ on $X \times [-1, 0]$.
  \end{itemize}
  Then we have the volume ratio estimate
  \begin{align}
  \Vol_{g(0)} \left( B_{g(0)}(x_0, r) \right) \leq C(m, \sigma_{-}, \sigma_{+}, B) r^m,
   \label{eqn:g23_9}
  \end{align}
  for every $x_0 \in X$, $r \in (0, 1]$. Here $C(m, \sigma_{-}, \sigma_{+}, B) \leq (4\pi)^{\frac{m}{2}} e^{1+ \left(\sigma_{-} +\frac{1}{3}\sigma_{+} \right)+2\sqrt{B+\left(\sigma_{-}+\frac{1}{3}\sigma_{+}\right)+\frac{m}{2} \log \frac{4}{3}}}$.
  \label{thmin:vupper}
\end{theoremin}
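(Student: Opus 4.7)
The plan is to work at a backward time $t_* = -\tau_0$ with $\tau_0\in (0,1]$ a free parameter (to be optimized at the end) and base a conjugate heat kernel argument at $(x_0, 0)$. Writing the kernel as $K(x_0,0;y,-\tau_0) = (4\pi\tau_0)^{-m/2} e^{-f(y)}$, the function $f$ is an admissible test function for $\mu(g(-\tau_0),\tau_0)$. The hypothesis applied at $(t,\theta)=(-\tau_0,\tau_0)$ gives $\mu(g(-\tau_0),\tau_0) \ge -B$, while Perelman's forward monotonicity of $\mathcal{W}$ along the conjugate heat flow, combined with its $\delta_{x_0}$-limit as $t\uparrow 0$, yields $\mathcal{W}(g(-\tau_0), f, \tau_0)\le 0$. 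Hence $-B \le \mathcal{W}(g(-\tau_0),f,\tau_0) \le 0$.

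Next, I would upgrade this integrated bound to a pointwise lower bound on $K$. Perelman's comparison $K(x_0,0;y,-\tau_0) \ge (4\pi\tau_0)^{-m/2}e^{-\ell(y,\tau_0)}$, where $\ell$ is the reduced distance based at $(x_0,0)$, reduces the task to an upper bound on $\ell$. The constant path at $x_0$ gives $\ell(x_0,\tau_0) \le \tfrac{1}{3}\sigma_+\tau_0$, the $\tfrac13$-coefficient arising from the characteristic weighted integral $\int_0^{\tau_0} \sqrt s\, R(x_0,-s)\, ds \le \tfrac{2}{3}\sigma_+\tau_0^{3/2}$ intrinsic to the $L$-length. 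Propagating this estimate to a Gaussian-size neighborhood $B_{g(-\tau_0)}(x_0, A\sqrt{\tau_0})$ via $L$-geodesics, with the propagation controlled by $R\le\sigma_+$ and by the $\mathcal{W}$-bound above (which absorbs the propagation constants), yields $\ell \le L_0(A, B, \sigma_\pm, m)$ on this ball.

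Combining with the reduced-volume monotonicity $\tilde V(\tau_0) \le 1$ immediately gives
$\Vol_{g(-\tau_0)}(B_{g(-\tau_0)}(x_0, A\sqrt{\tau_0})) \le (4\pi\tau_0)^{m/2} e^{L_0}$. The volume-form inequality $dV_{g(-\tau_0)} \ge e^{-\sigma_-\tau_0} dV_{g(0)}$ (coming from $R\ge -\sigma_-$ and $\partial_t dV_{g(t)} = -R\, dV_{g(t)}$) then converts this to a bound on $\Vol_{g(0)}(B_{g(0)}(x_0,r))$, once $\tau_0 = A r^2$ is chosen so that $B_{g(0)}(x_0,r)\subset B_{g(-\tau_0)}(x_0, A\sqrt{\tau_0})$. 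The resulting inequality has the shape
\[
\log\bigl( \Vol_{g(0)}(B_{g(0)}(x_0,r))/r^m\bigr) \le \tfrac{m}{2}\log(4\pi) + 1 + \bigl(\sigma_-+\tfrac13\sigma_+\bigr) + \Bigl(A+\tfrac{B+(\sigma_-+\sigma_+/3)+(m/2)\log(4/3)}{A}\Bigr),
\]
and AM-GM minimization over $A$ produces the $2\sqrt{\cdot}$ term, yielding precisely the claimed constant with its $(4\pi)^{m/2}$ Gaussian-normalization factor.

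\textbf{Principal difficulty.} The hard step is the $\ell$-propagation in the middle: without any Ricci-curvature bound, extending the constant-path value of $\ell$ at $x_0$ to a Gaussian-size ball relies on a careful $L$-geodesic calculation using only $R\le\sigma_+$ together with the entropy hypothesis $\mu\ge -B$ to absorb the propagation constants. Tracking these constants precisely is what produces the characteristic $\tfrac13\sigma_+$ coefficient and the $\tfrac m2\log(4/3)$-term in the statement.
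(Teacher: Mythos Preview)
Your outline has the right high-level shape (conjugate heat kernel, $\ell$-distance, $\mu$-functional, volume-form evolution), and you correctly identify the constant-path bound $\ell(x_0,\tau_0)\le \tfrac13\sigma_+\tau_0$. But the ``principal difficulty'' you flag is not resolved in your plan, and as written the argument does not close. Propagating a pointwise bound on $\ell$ from $x_0$ to a ball $B_{g(-\tau_0)}(x_0,A\sqrt{\tau_0})$ via $L$-geodesics requires controlling the $|\dot\gamma|^2$ term in the $L$-length along a test curve, and that needs some metric equivalence between time slices (this is exactly the extra hypothesis $g(t)\le\Lambda g(0)$ in Proposition~\ref{prn:g23_1}). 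The entropy hypothesis $\mu\ge -B$ is an \emph{integral} condition and does not by itself yield a pointwise bound on $\ell$ or on the metric; saying the $\mathcal W$-bound ``absorbs the propagation constants'' is not a mechanism. Also, your AM-GM explanation for the form of the constant is incorrect: the $1$, the $2\sqrt{\cdot}$, and the $\tfrac m2\log\tfrac43$ are not an optimization artifact.

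The paper's proof supplies a genuinely different tool for exactly this propagation step. It works with the \emph{forward} fundamental solution $w$ of $\square w=0$ based at $(x_0,-r^2)$ (rather than the conjugate kernel based at $(x_0,0)$) and proves the two properties $\int_X w(\cdot,0)\le e^{\sigma_-r^2}$ and a pointwise lower bound for $w(\cdot,0)$ on $B_{g(0)}(x_0,r)$. The lower bound has three ingredients: (i) a Nash--Moser iteration using the $\mu$-functional hypothesis to get the $L^\infty$ upper bound $\|w(t)\|_{L^\infty}\le(4\pi(t+r^2))^{-m/2}e^{B+\sigma_-(t+r^2)}$; (ii) the identity $w(x_0,0)=u(x_0,r^2)$ for the conjugate kernel $u$ based at $(x_0,0)$, together with Perelman's Harnack along the constant path, yielding $w(x_0,0)\ge(4\pi)^{-m/2}e^{-\sigma_+r^2/3}r^{-m}$; and (iii) a \emph{gradient estimate} for positive solutions of the forward heat equation (Zhang; Cao--Hamilton), which propagates the value at $x_0$ to the ball and reads
\[
w(x,0)\ \ge\ w(x_0,0)\,\exp\!\Bigl(-\tfrac{d^2(x,x_0,0)}{4|t_0|}-\tfrac{d(x,x_0,0)}{\sqrt{|t_0|}}\sqrt{\log\tfrac{\sup_{X\times[t_0,0]}w}{w(x_0,0)}}\Bigr).
\]
Choosing $t_0=-r^2/4$ makes $d/\sqrt{|t_0|}\le 2$ and $d^2/(4|t_0|)\le 1$, and the ratio under the square root becomes $B+(\sigma_-+\tfrac13\sigma_+)r^2+\tfrac m2\log\tfrac43$ (the $\log\tfrac43$ comes from comparing the Gaussian normalizations at $t_0+r^2=\tfrac34 r^2$ and at $r^2$). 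This is the step your plan is missing: the $\mu$-bound enters through (i) to feed the gradient estimate in (iii), not through any direct control of $\ell$.
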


\begin{corollaryin}
  Suppose $\left\{ (M^n, g(t)), 0 \leq t <\infty \right\}$ is a K\"ahler Ricci flow solution in the class $2\pi c_1(M)$ for some Fano manifold $M^n$.
  Then there is a constant $C=C(n, g(0))$ such that
  \begin{align*}
    Vol_{g(T)}(B_{g(T)}(x_0, r), r) \leq Cr^{2n}
  \end{align*}
  for every $x_0 \in M, r \in (0, 1], T \in [0, \infty)$.
  \label{clyin:krfupper}
\end{corollaryin}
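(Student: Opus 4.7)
The plan is to apply Theorem \ref{thmin:vupper} to a unit-length time window of the K\"ahler Ricci flow at each sufficiently large time $T$, using Perelman's two celebrated a priori estimates for the Fano K\"ahler Ricci flow; the remaining small values of $T$ are handled by the smoothness of the flow near $t=0$.

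First, I recall the two inputs from Perelman for the normalized K\"ahler Ricci flow $\partial_t g_{i\bar j} = -R_{i\bar j} + g_{i\bar j}$ on a Fano manifold in $2\pi c_1(M)$: (i) a uniform bound on the scalar curvature, $|R|_{g(t)} \leq C_1(n, g(0))$ for all $t \geq 0$; and (ii) a uniform lower bound on the $\mu$-functional, $\mu(g(t), \theta) \geq -B(n, g(0))$ for all $t \geq 0$ and $\theta \in (0, 1]$. Estimate (i) is Perelman's scalar curvature bound; estimate (ii) follows from Perelman's monotonicity of $\mu$ along the flow together with a uniform small-scale lower bound on $\mu(g(0), \cdot)$ coming from the smoothness of the initial metric.

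Next, fix any $T \geq 1$. After converting to the un-normalized Ricci flow on the window $[T-1, T]$ via the standard time-reparametrization and scaling $\hat g(s) = e^{-(t-T)} g(t)$, $s = e^{-(t-T)}-1$ (or an analogous rescaling), hypotheses (i) and (ii) translate directly into the two bullet points of Theorem \ref{thmin:vupper} on the interval $[-1, 0]$, with constants $\sigma_{\pm}$ and $B$ depending only on $n$ and $g(0)$. Applying Theorem \ref{thmin:vupper} then gives $\Vol_{\hat g(0)}(B_{\hat g(0)}(x_0, r)) \leq C r^{2n}$ for every $r \in (0,1]$; since the scaling factor is bounded away from $0$ and $\infty$, translating back to the normalized flow yields $\Vol_{g(T)}(B_{g(T)}(x_0, r)) \leq C(n, g(0)) r^{2n}$ for every $r \in (0, 1]$.

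For the remaining range $T \in [0, 1]$, standard short-time theory on the compact time interval $[0,1]$ gives two-sided metric bounds $C(n,g(0))^{-1} g(0) \leq g(t) \leq C(n,g(0)) g(0)$ and a uniform curvature bound, so the desired volume ratio estimate follows immediately from the corresponding estimate for $g(0)$. The main technical obstacle I anticipate is the bookkeeping between the normalized and un-normalized flows: one must check that the time-and-scale change in Step 2 sends the hypotheses on $\mu$ and on $R$ to the hypotheses of Theorem \ref{thmin:vupper} with constants that are genuinely independent of $T$, and in particular that the $\mu$-bound in (ii) holds at every scale $\theta \in (0, 1]$ rather than only at one preferred scale. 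Once these two uniformities are established, the rest of the argument is a direct application of Theorem \ref{thmin:vupper}.
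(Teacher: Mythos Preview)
Your proposal is correct and follows essentially the same route as the paper: convert the normalized K\"ahler Ricci flow on a unit window ending at $T$ into an unnormalized Ricci flow, use Perelman's uniform scalar curvature bound and the monotonicity/scaling of $\mu$ to verify the hypotheses of Theorem~\ref{thmin:vupper} with constants depending only on $g(0)$, and treat small $T$ separately. The paper carries out exactly the bookkeeping you flag as the main obstacle, with the explicit reparametrization $\tilde g(s)=(1-\tfrac{s}{2})\,g\bigl(T+\log(1-\tfrac{s}{2})^{-1}\bigr)$ and the bound $-B=\inf_{0<\tau\le 2}\mu(g(0),\tau)$.
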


\begin{remarkin}
Corollary~\ref{clyin:krfupper} generalize the volume ratio upper bound in~\cite{CWa1} to high dimension. However, the constant $C$ is not as precise as that of
$~\cite{CWa1}$ when $r$ is small. Corollary~\ref{clyin:krfupper} and part of Theorem~\ref{thmin:vupper} overlaps the main result of~\cite{Zhq3}.
\end{remarkin}

The organization of the paper is as follows.  We prove the curvature estimates (Theorem~\ref{thmin:c9_1} and Theorem~\ref{thmin:c9_2}) and their applications in section 2,
the volume estimate (Theorem~\ref{thmin:vupper}) and its applications in section 3.

\noindent {\bf Acknowledgment}
 The second author would like to thank Jeffrey Streets for helpful discussions during the preparation of this paper.

\section{Curvature estimates}

  Theorem~\ref{thmin:c9_1} is indicated by the following Proposition, which is more general.

  \begin{proposition}
    For every $\delta>0$, there exists a constant $\epsilon=\epsilon(m,\delta)$ with the following properties.

    Suppose that $\left\{ (X^m, g(t)), -\frac{1}{8} \leq t \leq 0\right\}$ is a Ricci flow solution on a complete manifold $X^m$ such that
    $Q_g(0)=1$ and $Q_g(t) \leq 2$ for every $-\frac{1}{8} \leq t \leq 0$.   If $\left\{ (X^m, h(t)), -\frac{1}{8} \leq t \leq 0 \right\}$ is another
    Ricci flow solution on $X$ such that $Q_h(t) \leq 2$ for every $-\frac{1}{8} \leq t \leq 0$ and $e^{-\epsilon} g(0) \leq h(0) \leq e^{\epsilon} g(0)$, then
    we have
    \begin{align*}
       \left| \log Q_h(0) \right|< \delta.
    \end{align*}
    \label{prn:g22_1}
  \end{proposition}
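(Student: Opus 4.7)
The statement is a quantitative form of backward uniqueness for the Ricci flow under the bounded curvature hypothesis, so the natural approach is contradiction combined with a pointed $C^\infty$ compactness argument and a Kotschwar-type backward uniqueness theorem.

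Assume the conclusion fails: there exist $\delta>0$, $\epsilon_k\downarrow 0$ and pairs of Ricci flows $(X_k, g_k(t)), (X_k, h_k(t))$ on complete manifolds over $[-\tfrac{1}{8},0]$ satisfying all the hypotheses, but with $|\log Q_{h_k}(0)|\ge\delta$. Choose base points $x_k\in X_k$ that record the curvature gap at $t=0$: if $Q_{h_k}(0)\ge e^{\delta}$, pick $x_k$ with $|Rm|_{h_k(0)}(x_k)\to Q_{h_k}(0)$; if instead $Q_{h_k}(0)\le e^{-\delta}$, pick $x_k$ with $|Rm|_{g_k(0)}(x_k)\to Q_{g_k}(0)=1$.

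The uniform bound $Q_{g_k},Q_{h_k}\le 2$ on $[-\tfrac{1}{8},0]$ gives, via Shi's interior derivative estimates, uniform $C^\infty$ control of $Rm_{g_k}$ and $Rm_{h_k}$ on $[-\tfrac{1}{16},0]$. Since the conjugate radius at time $0$ is uniformly bounded below in terms only of $m$, one can pull both flows back under the $g_k(0)$-exponential map based at $x_k$ onto a fixed Euclidean ball $B\subset \R^m$, producing Ricci flow solutions with uniform smooth bounds on $B\times[-\tfrac{1}{16},0]$. Extract $C^\infty$-subsequential limits $g_\infty(t), h_\infty(t)$. The pinching $e^{-\epsilon_k} g_k(0)\le h_k(0)\le e^{\epsilon_k} g_k(0)$ with $\epsilon_k\to 0$ forces $g_\infty(0)=h_\infty(0)$ pointwise on $B$, and hence smoothly. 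A Kotschwar-type backward uniqueness theorem for the Ricci flow then yields $g_\infty\equiv h_\infty$ throughout $B\times[-\tfrac{1}{16},0]$. At $t=0$ this equality gives $|Rm|_{g_\infty(0)}(0)=|Rm|_{h_\infty(0)}(0)$, which directly contradicts the base point choice: the two quantities differ by at least $\min(e^{\delta}-1,\,1-e^{-\delta})>0$.

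\textbf{Main obstacle.} The subtle point is the interaction between compactness and backward uniqueness. No non-collapsing is assumed, so Hamilton's pointed compactness is unavailable in its global form; we must localise by pulling back via the exponential chart at $x_k$ and work on a fixed Euclidean ball. Correspondingly, the Kotschwar backward uniqueness must be used in a local form, which follows from localising his energy/Carleman-style estimate for the Ricci--DeTurck flow by means of cutoff functions, exploiting the strictly parabolic nature of the DeTurck equation. Setting up this local backward uniqueness carefully, and checking that the passage to the limit respects all of the relevant estimates, is where the technical bulk of the argument lies.
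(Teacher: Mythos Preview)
Your contradiction/compactness framework matches the paper's, and your point selection is fine. But you have misidentified where the actual difficulty lies.

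First, the Kotschwar step is a red herring. You already write that the pinching $e^{-\epsilon_k}g_k(0)\le h_k(0)\le e^{\epsilon_k}g_k(0)$ forces $g_\infty(0)=h_\infty(0)$ smoothly. That equality at $t=0$ alone immediately gives $|Rm|_{g_\infty(0)}(0)=|Rm|_{h_\infty(0)}(0)$; no backward uniqueness is needed, and your own sentence ``At $t=0$ this equality gives\ldots'' makes that apparent. So the local Carleman/energy argument you flag as the ``main obstacle'' is irrelevant to this proposition.

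The genuine gap is earlier: you assert that after pulling back via the $g_k(0)$-exponential map, \emph{both} flows have ``uniform smooth bounds on $B\times[-\tfrac{1}{16},0]$'' and hence subconverge in $C^\infty$ in that fixed chart. For $\tilde g_k$ this is standard (normal coordinates for itself, curvature and Shi bounds control the metric components). For $\tilde h_k$ it is not: the pinching only gives $C^0$ control of the components of $\tilde h_k(0)$ in the $g_k(0)$-chart, and Shi's estimates are \emph{intrinsic}; they do not by themselves bound coordinate derivatives of $\tilde h_k$ in a chart that is not adapted to $h_k$. (A flat metric pulled back by a diffeomorphism that is $C^0$-close to the identity but wild in $C^1$ shows that bounded curvature plus $C^0$ bounds on components cannot upgrade to $C^1$ bounds.) The Ricci flow equation does not rescue this directly because of its diffeomorphism degeneracy. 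So the step ``$g_\infty(0)=h_\infty(0)$ pointwise, hence smoothly'' is exactly the nontrivial point, and you have not justified it.

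The paper resolves this by \emph{not} trying to take both limits in one chart. It takes two separate Cheeger--Gromov limits $(S',o',g')$ and $(S'',o'',g'')$, observes that the identity maps are $e^{\pm\epsilon_k}$-bi-Lipschitz and hence converge to a distance-preserving bijection $F_\infty:(S',g')\to(S'',g'')$, and then invokes the Calabi--Hartman (Myers--Steenrod type) theorem that a metric isometry between smooth Riemannian manifolds is automatically a smooth Riemannian isometry. This yields $|Rm|_{g'}(o')=|Rm|_{g''}(o'')$ without ever needing $\tilde h_k$ to be smoothly controlled in the $g_k$-chart, and without any backward-uniqueness machinery.
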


  \begin{proof}
     If this proposition were wrong, there should exist a constant $\delta_0$ and constants $\epsilon_i \to 0$ with corresponding Ricci flows
     $\left\{ (X_i^m, g_i(t)), -\frac{1}{8} \leq t \leq 0 \right\}$ and $\left\{ (X_i^m, h_i(t)), -\frac{1}{8} \leq t \leq 0 \right\}$ violating the statements.
  \begin{itemize}
    \item  $Q_{g_i}(t) \leq 2$ for every $-\frac{1}{8} \leq t \leq 0$, $Q_{g_i}(0)=1$.
    \item  $Q_{h_i}(t) \leq 2$ for every $-\frac{1}{8} \leq t \leq 0$, $|\log Q_{h_i}(0)| \geq \delta_0$.
    \item  $e^{-\epsilon_i} g_i(0) \leq h_i(0) \leq e^{\epsilon_i} g_i(0)$.
  \end{itemize}
  From the first and the second property and the definition that $\displaystyle Q(t)=\sup_X |Rm|(\cdot, t)$, we can choose points $z_i \in X_i$ such that
  \begin{align}
  \begin{cases}
   &\max\left\{ |Rm|_{h_i(0)}(z_i),  |Rm|_{g_i(0)}(z_i) \right\} \geq e^{-2 \delta_0}, \\
   &\left|\log \frac{|Rm|_{h_i(0)}(z_i)}{|Rm|_{g_i(0)}(z_i)} \right| \geq \frac{1}{2} \delta_0.
  \end{cases}
  \label{eqn:c17_pointchoice}
  \end{align}
  By the uniform curvature bound, the conjugate radii of $z_i$ under the metrics $g_i(0)$ are uniformly bounded from below by $2c_0=2c_0(m)$.
  Let $S$ be the standard ball in $\R^m$ with radius $c_0$, $\varphi_i$ be the exponential map from $\R^m=T_{z_i}X_i$ to $(X_i, g_i(0))$.
  Clearly, $\varphi_i$ is a local diffeomorphism from $S$ to $\varphi_i(S)$. We can use $\varphi_i$ to pull back the metrics:
  \begin{align}
    \tilde{g}_i(t) \triangleq \varphi_i^{*} g_i(t), \quad \tilde{h}_i(t) \triangleq \varphi_i^{*} h_i(t).
  \label{eqn:c18_tgi}
  \end{align}
  By Shi's local estimate along the Ricci flow, we have
  \begin{align*}
     |\nabla^k Rm|_{g_i(0)} \leq C(m, k),  \quad |\nabla^k Rm|_{h_i(0)} \leq C(m, k).
  \end{align*}
  It follows that
\begin{align}
  |\nabla^k Rm|_{\tilde{g}_i(0)}(p) \leq C(m, k), \quad |\nabla^k Rm|_{\tilde{h}_i(0)}(p) \leq C(m, k),\quad \forall \; p \in S, \; k \in \Z^{+} \cup \left\{ 0 \right\}.
\end{align}
Note that $inj_{\tilde{g}_i(0)}(o) \geq 2 c_0$. Combining this with the fact that $|Rm|_{\tilde{g_i}(0)} \leq 2$ implies that
$\Vol_{\tilde{g}_i(0)} \left( B_{\tilde{g}_i(0)}(p, \frac{c_0}{4}) \right)$ is uniformly bounded from below for every $p \in S$.
In light of the metric equivalence, we know that
$\Vol_{\tilde{h}_i(0)} \left( B_{\tilde{h}_i(0)}(p, \frac{c_0}{2}) \right)$ is uniformly bounded from below for every $p \in S$. Therefore, the estimate
of Cheeger-Gromov-Taylor (c.f.~\cite{CGT}) applies and we have
\begin{align*}
  inj_{\tilde{h}_i(0)}(p) \geq c_1(m)>0, \quad \forall \; p \in S.
\end{align*}
Now we collect the information we have
\begin{align}
\begin{cases}
  &\min\left\{ inj_{\tilde{g}_i(0)}(o), inj_{\tilde{h}_i(0)}(o) \right\} \geq c_1(m)>0, \\
  &\max\left\{ |\nabla^k Rm|_{\tilde{g}_i(0)}(p), |\nabla^k Rm|_{\tilde{h}_i(0)}(p) \right\} \leq C(k,m),
  \quad \forall k \in \Z^{+} \cup \left\{ 0 \right\}, \; p \in S, \\
  &\max\left\{ |Rm|_{\tilde{h}_i(0)}(o),  |Rm|_{\tilde{g}_i(0)}(o) \right\} \geq e^{-2\delta_0}, \\
  &\left|\log \frac{|Rm|_{\tilde{h}_i(0)}(o)}{|Rm|_{\tilde{g}_i(0)}(o)} \right| \geq \frac{1}{2} \delta_0.
\end{cases}
\label{eqn:c18_condition}
\end{align}
Therefore we can take smooth convergence in the Cheeger-Gromov sense:
\begin{align}
  &(S, o, \tilde{g}_i(0)) \stackrel{C.G.C^{\infty}}{\longrightarrow} (S',o',g'),  \label{eqn:c18_conv1}\\
  &(S, o, \tilde{h}_i(0)) \stackrel{C.G.C^{\infty}}{\longrightarrow} (S'', o'', g''). \label{eqn:c18_conv2}
\end{align}
Define $F_i$ to be the identity map:
\begin{align*}
 F_i:  & (S, o, \tilde{g}_i(0))    \mapsto (S, o, \tilde{h}_i(0)), \\
 & x \mapsto F_i(x)=x.
\end{align*}
In view of the third property, i.e., $e^{-\epsilon_i} g_i(0) \leq h_i(0) \leq e^{\epsilon_i} g_i(0)$,  we see that
\begin{align}
  \left| \log  \frac{d_{\tilde{h}_i(0)}(x, y)}{d_{\tilde{g}_i(0)}(F_i^{-1}(x), F_i^{-1}(y))} \right|
 =\left| \log  \frac{d_{\tilde{g}_i(0)}(x, y)}{d_{\tilde{h}_i(0)}(F_i(x), F_i(y))} \right|
 =\left| \log  \frac{d_{\tilde{g}_i(0)}(x, y)}{d_{\tilde{h}_i(0)}(x, y)} \right|
  \leq \epsilon_i \to 0, \quad \forall \; x, y \in S.
  \label{eqn:c18_1}
\end{align}
Therefore, we see that $F_i$ converges to an isometry map $F_{\infty}$:
\begin{align*}
 F_{\infty}:   (S', o', g') \to (S'', o'', g'');\quad  F_{\infty} (o')=o''.
\end{align*}
Since both $(S', o', g')$ and $(S'', o'', g'')$ are smooth geodesic balls, a theorem of Calabi-Hartman ~\cite{CH}
says that such an isometry $F_{\infty}$ must be smooth. Clearly, we have
\begin{align}
  |Rm|_{g'}(o')=|Rm|_{g''}(o'').
\label{eqn:c18_curv}
\end{align}
However, by the smooth convergence equations (\ref{eqn:c18_conv1}) and (\ref{eqn:c18_conv2}),  and the last two inequalities of
condition (\ref{eqn:c18_condition}), we obtain
\begin{align*}
  \max\left\{ |Rm|_{g'}(o'),  |Rm|_{g''}(o'') \right\} \geq e^{-2\delta_0}, \quad
  \left| \log \frac{|Rm|_{g'}(o')}{|Rm|_{g''}(o'')}\right| \geq \frac{1}{2} \delta_0.
\end{align*}
In particular, we obtain $|Rm|_{g'}(o') \neq |Rm|_{g''}(o'')$, which contradicts to equation (\ref{eqn:c18_curv}).
This contradiction establish the proof of Proposition~\ref{prn:g22_1}.
\end{proof}

\begin{proof}[Proof of Theorem~\ref{thmin:c9_1}]
After we obtain Proposition~\ref{prn:g22_1}, (\ref{eqn:g24_1}) and (\ref{eqn:g22_2}) follows from Proposition~\ref{prn:g22_1} trivially.
The proof of Theorem~\ref{thmin:c9_1} follows from the same argument as Theorem 3.1 of~\cite{BWa2}.
 We shall be sketchy here. Let $\epsilon_0=\epsilon(m, \log 2) =\epsilon_0(m)$ as in Proposition~\ref{prn:g22_1}, $t_i$ be the first time such that
$Q(t)=2^i$.  By the Gap property in Proposition~\ref{prn:g22_1}, we obtain that
\begin{align*}
  \int_{t_i}^{t_{i+1}} P(t) dt >\epsilon_0.
\end{align*}
Choose $N$ such that $\displaystyle 2^N \leq \sup_{t\in [0, K]} Q(t)<2^{N+1}$. It follows that
\begin{align*}
  \int_0^K P(t)dt \geq  \int_{0}^{t_N} P(t)dt>N\epsilon_0,  \quad \Rightarrow N < \frac{\int_0^K P(t)dt}{\epsilon_0},
  \quad \Rightarrow \sup_{t\in [0, K]} Q(t) \leq 2^{N+1} <2^{\frac{\int_0^K P(t)dt}{\epsilon_0}+1}.
\end{align*}
\end{proof}

In order to prove Theorem~\ref{thmin:c9_2}, we need to prove the following lemma first.

\begin{lemma}
  There is a constant $\delta_0=\delta_0(m)$ such that the following properties hold.

  Suppose $\left\{ (X^m, g(t)), -1 \leq t \leq 0 \right\}$ is a Ricci flow solution on the complete manifold $X^m$,
  $|Rm|_{g(t)} \leq \delta_0$ whenever $x \in B_{g(t)}(x_0, \delta_0^{-1})$, $t \in [-1, 0]$.  Then
  \begin{align}
    \sup_{B_{g(0)}(x_0, \frac{1}{2}) \times [-\frac{1}{2}, 0]} |Ric| \leq \delta_0^{-1} \norm{R}{L^{\infty}(B_{g(0)}(x_0, 10) \times [-1, 0])}^{\frac{1}{2}}.
    \label{eqn:c18_weakricci}
  \end{align}
  \label{lma:c18_weakricci}
\end{lemma}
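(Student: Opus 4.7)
The plan is to argue by contradiction and blow-up, exactly in the spirit of Proposition~\ref{prn:g22_1}: assume the lemma fails, rescale parabolically about a near-maximal point of $|Ric|$, extract a smooth Cheeger--Gromov limit, and derive a contradiction from the rigidity that a Ricci flow with identically vanishing scalar curvature must be Ricci flat.

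Suppose no such $\delta_0$ exists. Then there is a sequence $\delta_i \to 0$, complete Ricci flows $\{(X_i^m, g_i(t))\}_{-1 \leq t \leq 0}$, and base points $x_0^i$ with $|Rm|_{g_i(t)} \leq \delta_i$ on $B_{g_i(t)}(x_0^i, \delta_i^{-1})$ for all $t \in [-1, 0]$, yet
\[
M_i := \sup_{B_{g_i(0)}(x_0^i, 1/2) \times [-1/2, 0]} |Ric|_{g_i} \;>\; \delta_i^{-1} \lambda_i, \qquad \lambda_i := \norm{R}{L^\infty(B_{g_i(0)}(x_0^i, 10) \times [-1, 0])}^{\frac{1}{2}}.
\]
Since $|Ric|$ and $|R|$ are pointwise controlled by $|Rm|$, both $M_i$ and $\lambda_i^2$ tend to zero with $\delta_i$. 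I would then pick $(y_i, s_i)$ essentially realizing $M_i$ and, via a Perelman-style point-picking, arrange that $|Rm|_{g_i}$ stays comparable on a parabolic neighborhood of $(y_i, s_i)$ at a scale adapted to $M_i$. The parabolic rescaling $\tilde g_i(t) := M_i\, g_i(M_i^{-1} t + s_i)$ then yields $|Ric|_{\tilde g_i}(y_i, 0) \sim 1$ together with
\[
|R|_{\tilde g_i} \leq M_i^{-1} \lambda_i^2 < \delta_i \lambda_i \leq C(m)\, \delta_i^{3/2} \to 0,
\]
while $|Rm|_{\tilde g_i}$ remains bounded on a parabolic neighborhood of $(y_i, 0)$ of definite size courtesy of the point-picking.

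The remainder of the argument mirrors the proof of Proposition~\ref{prn:g22_1}: Shi's local derivative estimates give uniform bounds on $|\nabla^k Rm|_{\tilde g_i}$, and the Cheeger--Gromov--Taylor injectivity radius estimate~\cite{CGT} provides a uniform lower bound at $y_i$. Passing to a subsequential smooth Cheeger--Gromov limit produces a Ricci flow $(X_\infty, y_\infty, g_\infty(t))$ on a parabolic neighborhood of $(y_\infty, 0)$ with $R_{g_\infty} \equiv 0$. The scalar curvature evolution $\partial_t R = \Delta R + 2|Ric|^2$ then forces $|Ric|_{g_\infty} \equiv 0$ on this neighborhood, contradicting $|Ric|_{g_\infty}(y_\infty, 0) \sim 1$.

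The hard part will be the point-picking/rescaling step: arranging simultaneously that $|Ric|_{\tilde g_i}(y_i, 0)$ stays bounded below by a positive constant and that $|Rm|_{\tilde g_i}$ stays bounded above on a parabolic neighborhood of definite size. Since $|Rm|_{g_i}$ is already tiny ($\leq \delta_i$) while $M_i$ could in principle be much smaller, the naive rescaling by $M_i$ may inflate $|Rm|_{\tilde g_i}$ without control. Resolving this tension will require a careful Perelman-type point-picking scheme, perhaps combined with a dichotomy based on the relative size of $\lambda_i$ and $\delta_i^2$. Once that selection is under control, Cheeger--Gromov compactness and the $R \equiv 0 \Rightarrow Ric \equiv 0$ rigidity furnish the contradiction essentially formally.
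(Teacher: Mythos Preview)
The paper's route is direct rather than by contradiction: with $\delta_0 \ll 1/m^2$ the conjugate radius of $g(0)$ at $x_0$ exceeds $100$, so the exponential map pulls the flow back to a ball in $T_{x_0}X$ on which one has $|Rm|\leq \delta_0$, a uniform Sobolev constant $\sigma(m)$, and injectivity radius at least $3$ for every $t$. Theorem~3.2 of~\cite{BWa2} is then applied verbatim on this lifted flow to obtain $|Ric|\leq C(m)\|R\|_{L^\infty}^{1/2}$, and $C(m)$ is absorbed into $\delta_0^{-1}$ by shrinking $\delta_0$. No limits are taken.

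Your blow-up scheme has a fatal obstruction beyond the one you flag. Since $|Ric|_{g_i}\leq C(m)|Rm|_{g_i}\leq C(m)\delta_i$, one has $M_i\to 0$; hence the rescaled flow $\tilde g_i(t)=M_i\,g_i(M_i^{-1}t+s_i)$ lives only on a time interval of length $M_i\to 0$. Any subsequential limit is therefore at best a single Riemannian metric, not a Ricci flow on an open time interval, and your rigidity step via $\partial_t R=\Delta R+2|Ric|^2$ is unavailable: a static metric with $R\equiv 0$ is by no means Ricci flat. Equivalently, Shi's estimates on $\tilde g_i$ give only $|\nabla^k Rm|_{\tilde g_i}\lesssim M_i^{-k/2}\to\infty$, so there is no smooth compactness to begin with. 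This is independent of the $|Rm|$-control difficulty you already identified, which is also genuine: point-picking on $|Ric|$ cannot tame the Weyl part of $Rm$, while point-picking on $|Rm|$ may send $|Ric|_{\tilde g_i}(y_i,0)\to 0$ and destroy the contradiction. The paper's approach sidesteps both problems because the Moser-type estimate from~\cite{BWa2} operates at the original, fixed scale.
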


\begin{proof}
Choose $\delta_0$ small enough. It will be determined later that how small $\delta_0$ is.

Identify $T_{x_0}X$ with $\R^m$. Let $\varphi$ be the exponential map from $T_{x_0}X$ to $X$ under the metric $g(0)$.
Since $|Rm|_{g(0)} \leq \delta_0 << \frac{1}{m^2}$, the conjugate radius of $g(0)$ at $x_0$ is far greater than $100$.
Therefore, $\varphi$ is a local diffeomorphism from $\Omega=B(o, 100)$, the geodesic ball of radius 100 on $T_{x_0} X$,
to $\varphi(\Omega) \subset  X$.   Define $\tilde{g}(t)= \varphi^{*} (g(t))$ for every $t \in [-1, 0]$.
Clearly, $inj_{\tilde{g}(0)}(x_0) \geq 100$. Moreover, by the estimates of Jacobi fields' lengths, we obtain that
\begin{align*}
  \Vol_{\tilde{g}(0)} \left( B_{\tilde{g}(0)}(x_0, r) \right) \geq \frac{1}{2} \omega(m) r^m, \quad \forall \; 0<r\leq 100,
\end{align*}
where $\omega(m)$ is the volume of the standard unit ball in $\R^m$.

It is not hard to see that $\left\{ (\Omega, \tilde{g}(t)), -1 \leq t \leq  0 \right\}$ is a Ricci flow solution.
It satisfies the following properties if we choose $\delta_0=\delta_0(m)$ very small.
\begin{itemize}
  \item  $|Rm|_{\tilde{g}(t)} \leq \delta_0 << \frac{1}{m^2}$ whenever
    $x \in B_{\tilde{g}(t)}(x_0, \delta_0^{-1})$, $t \in [-1, 0]$.
  \item $(B_{\tilde{g}(0)}(x_0, 100), \tilde{g}(t))$ has a uniform Sobolev constant $\sigma(m)$ for every $t \in [-1, 0]$.
  \item $inj_{\tilde{g}(t)}(x_0) \geq 3$ uniformly for every $t \in [-1, 0]$.
\end{itemize}
Therefore, the argument in Theorem 3.2 of~\cite{BWa2} applies. We have
\begin{align*}
  \sup_{B_{\tilde{g}(0)}(x_0, \frac{1}{2}) \times [-\frac{1}{2}, 0]} |Ric|
  \leq C \norm{R}{L^{\infty}(B_{\tilde{g}(0)}(x_0, 10) \times [-1, 0])}^{\frac{1}{2}}.
\end{align*}
Note that $\tilde{g}(0)$ is the metric lifted from $g(0)$. Therefore,
$\varphi (B_{\tilde{g}(0)}(x_0, r))=B_{g(0)}(x_0, r)$ whenever $r$ is less than the conjugate radius of $x_0$ under the metric
$g(0)$, which is far greater than $100$. It follows that
\begin{align*}
  \sup_{B_{g(0)}(x_0, \frac{1}{2}) \times [-\frac{1}{2}, 0]} |Ric|
  &=\sup_{B_{\tilde{g}(0)}(x_0, \frac{1}{2}) \times [-\frac{1}{2}, 0]} |Ric|\\
  &\leq C(m) \norm{R}{L^{\infty}(B_{\tilde{g}(0)}(x_0, 10) \times [-1,0])}^{\frac{1}{2}}\\
  &=C(m) \norm{R}{L^{\infty}(B_{g(0)}(x_0, 10) \times [-1,0])}^{\frac{1}{2}}\\
  &\leq \delta_0^{-1} \norm{R}{L^{\infty}(B_{g(0)}(x_0, 10) \times [-1,0])}^{\frac{1}{2}}.
\end{align*}
So we finish the proof of Lemma~\ref{lma:c18_weakricci}.
\end{proof}

\begin{proof}[Proof of Theorem~\ref{thmin:c9_2}]
By rescaling, we have the following equivalent property of Lemma~\ref{lma:c18_weakricci}.

Suppose $\left\{ (X^m, g(t)), -\delta_0^4 \leq t \leq 0 \right\}$ is a Ricci flow solution on the complete manifold $X^m$,
$|Rm|_{g(t)} \leq \delta_0^{-3}$ whenever $x \in B_{g(t)}(y_0, \delta_0)$, $t \in [-\delta_0^4, 0]$.  Then
  \begin{align}
    \sup_{B_{g(0)}(y_0, \frac{\delta_0}{2} ) \times [-\frac{\delta_0^4}{2}, 0]} |Ric|
    \leq \delta_0^{-3} \norm{R}{L^{\infty}(B_{g(0)}(y_0, 10\delta_0) \times [-\delta_0^4, 0])}^{\frac{1}{2}}.
    \label{eqn:c18_weakricci_2}
  \end{align}
  For every $y_0 \in B_{g(0)}(x_0, \frac{1}{2})$, $t \in [-\frac{1}{16}, 0]$, we have
  \begin{align*}
    |Ric|_{g(t)}(y_0) \leq \delta_0^{-3} \norm{R}{L^{\infty}(B_{g(t)}(y_0, 10\delta_0) \times [t-\delta_0^4, t])}^{\frac{1}{2}}
       \leq \delta_0^{-3} \norm{R}{L^{\infty}(B_{g(0)}(x_0, 1) \times [-\frac{1}{8}, 0])}^{\frac{1}{2}}.
  \end{align*}
  Let $A(m)=\delta_0^{-3}(m)$, we obtain
  \begin{align*}
    \sup_{B_{g(0)}(x_0, \frac{1}{2}) \times [-\frac{1}{16}, 0]} |Ric| \leq A \norm{R}{L^{\infty}(B_{g(0)}(x_0, 1)
     \times [-\frac{1}{8}, 0])}^{\frac{1}{2}}.
  \end{align*}
\end{proof}

Before we prove Corollary~\ref{clyin:g24_1}, let's first see the following gap inequality for $Q$ on any ancient solution:
 \begin{align}
   \liminf_{t \to -\infty} Q(t)|t| \geq  \frac{1}{8}.
 \label{eqn:g25_1}
 \end{align}
 Actually, by maximum principle, $Q(t)$ satisfies the equation $\displaystyle \D{}{t} Q \leq 8Q^2$.
 Choose $t_0$ such that $Q(t_0)>0$. Then for every $t \leq t_0 \leq  0$, ODE comparison implies
 \begin{align*}
  Q(t) \geq \frac{1}{8(t_0-t) +Q^{-1}(t_0)}.
 \end{align*}
 Then (\ref{eqn:g25_1}) follows trivially from the above inequality.

 If we regard $-\infty$ as a singular time.  Then (\ref{eqn:g25_1}) suggests that this singular time behaves similar to a finite time singularity when $Q(t)|t|$ is concerned.
 This observation inspires us to consider the gap of $P(t)|t|$ associated with the singular time $-\infty$.  Generally, such a gap may not exist. A trivial example is
 the Ricci-flat Ricci flow solution on a K3 surface, where $P(t)|t| \equiv 0$.     However, if we assume $Q(t)|t|$ is uniformly bounded, then such a gap does exist.

  \begin{proof}[Proof of Corollary~\ref{clyin:g24_1}]

  The proof is similar to the case of finite time singularity, c.f.~Theorem 1 of~\cite{BWa2}.

 If this theorem failed, we could find an ancient solution such that
 \begin{align*}
   \limsup_{t \to -\infty} Q(t)|t|<\infty,   \quad \limsup_{t \to -\infty} P(t)|t|<\eta \epsilon_0.
 \end{align*}
 for some positive number $\eta<\frac{1}{\log 2}$.
 Without loss of generality, we can assume
 \begin{align*}
   \frac{1}{16|t|} < Q(t) < \frac{C_0}{|t|},  \quad P(t) < \frac{\eta \epsilon_0}{|t|}, \quad \forall \; t \in (-\infty, -1].
 \end{align*}
 By a standard time selecting process, for every large positive integer number $L>0$, we can find a time  $t_L<-L$ such that
 \begin{align}
   Q(s)<2Q(t_L), \quad \forall \; s \in [t_L-\frac{1}{8Q(t_L)}, t_L].
 \label{eqn:I_19_1}
 \end{align}
 Applying a rescaling if necessary, Theorem~\ref{thmin:c9_1} and equation (\ref{eqn:I_19_1}) imply
 \begin{align*}
   Q(-1)<Q(t_L) 2^{\frac{\int_{t_L}^{-1} P(t) dt}{\epsilon_0}+1}.
 \end{align*}
 Therefore, we have
 \begin{align*}
   Q(t_L)> Q(-1) 2^{-\frac{\int_{t_L}^{-1} P(s) ds}{\epsilon_0}-1}>Q(-1) 2^{-\int_{t_L}^{-1} \frac{\eta}{|s|} ds -1}= Q(-1) 2^{- \eta \log |t_L| -1}
   =\frac{Q(-1)}{2} |t_L|^{-\eta \log 2}.
 \end{align*}
 It follows that
 \begin{align*}
  C_0 \geq  \limsup_{t \to -\infty} Q(t)|t| \geq  \lim_{L \to \infty}  \frac12 Q(-1) |t_L|^{1- \eta \log 2}=\infty,
 \end{align*}
 since $t_L \leq -L$ and $1-\eta \log 2 >0$. Contradiction!
 \end{proof}

 Corollary~\ref{clyin:ssolitongap} follows from Theorem~\ref{thmin:c9_2},
 Corollary~\ref{clyin:g24_1}, and the fact that every shrinking soliton with bounded curvature is a type-I ancient solution.

\section{Volume Estimates}

The following proposition is the motivation of Theorem~\ref{thmin:vupper}.

\begin{proposition}
Suppose  $\left\{ (X^m, g(t)), -1 \leq t \leq 0 \right\}$ is a Ricci flow solution on a closed manifold $X^m$, $x_0 \in X$.
Suppose also that
\begin{itemize}
  \item $\displaystyle -\sigma_{-}  \leq  R \leq \sigma_{+}$ on $B_{g(0)}(x_0, 1) \times [-1, 0]$.
  \item $g(x,t) \leq \Lambda g(x,0)$ for every $t \in [-1, 0], \; x \in B_{g(0)}(x_0,1)$.
\end{itemize}
Then we have
\begin{align}
  \Vol_{g(0)} \left(B_{g(0)}\left(x_0, r\right)\right) \leq C(m, \sigma_{-}, \sigma_{+}, \Lambda) r^m,
\label{eqn:g22_3}
\end{align}
for every $r \in (0, 1]$.  The constant $C(m, \sigma_{-}, \sigma_{+}, \Lambda)$ in equation (\ref{eqn:g22_3}) can be chosen as
$\displaystyle (4\pi)^{\frac{m}{2}} e^{\sigma_{-} +\frac{\sigma_{+}}{3} + \frac{\Lambda}{4}}$.
\label{prn:g23_1}
\end{proposition}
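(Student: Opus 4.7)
The shape of the constant in (\ref{eqn:g22_3})---a Gaussian prefactor $(4\pi)^{m/2}$ together with two $\sqrt{\tau}$-weighted integrals of the scalar curvature bounds---suggests using Perelman's reduced volume
\[
\tilde V(\tau) \;=\; \int_M (4\pi\tau)^{-m/2} e^{-\ell(x,\tau)}\, dV_{g(-\tau)}(x)
\]
based at $(x_0, 0)$. The plan is to use the monotonicity $\tilde V(1) \leq \lim_{\tau \to 0^+}\tilde V(\tau) = 1$ together with an explicit upper bound on the reduced distance $\ell(x, 1)$ for $x \in B_{g(0)}(x_0, 1)$, and then transfer the resulting volume bound from $g(-1)$ back to $g(0)$ using the scalar curvature hypothesis.

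By the parabolic rescaling $\tilde g(t) := r^{-2} g(r^2 t)$, which preserves the hypotheses with $\sigma_\pm$ replaced by $r^2\sigma_\pm \leq \sigma_\pm$ and the same $\Lambda$, I reduce at once to the case $r = 1$, so the goal becomes
\[
\Vol_{g(0)}(B_{g(0)}(x_0, 1)) \;\leq\; (4\pi)^{m/2} e^{\sigma_- + \sigma_+/3 + \Lambda/4}.
\]
For each $x \in B_{g(0)}(x_0, 1)$, let $\gamma_0 : [0, 1] \to X$ be a minimizing $g(0)$-geodesic from $x_0$ to $x$, parametrized at constant $g(0)$-speed $d_{g(0)}(x, x_0) \leq 1$, and I would use the Perelman-style test curve $\gamma(\tau) := \gamma_0(\sqrt{\tau})$. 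Since $\gamma_0$ lies inside $B_{g(0)}(x_0, 1)$ throughout, the metric equivalence $g(-\tau) \leq \Lambda g(0)$ yields
\[
|\gamma'(\tau)|^2_{g(-\tau)} \;\leq\; \Lambda\, |\gamma'(\tau)|^2_{g(0)} \;=\; \frac{\Lambda\, d_{g(0)}(x, x_0)^2}{4\tau},
\]
and combined with $R \leq \sigma_+$ on the curve one obtains
\[
L(\gamma) = \int_0^1 \sqrt{\tau}\bigl(R + |\gamma'|^2_{g(-\tau)}\bigr)\, d\tau \;\leq\; \frac{2\sigma_+}{3} + \frac{\Lambda\, d_{g(0)}(x, x_0)^2}{2},
\]
so $\ell(x, 1) = L(\gamma)/2 \leq \sigma_+/3 + \Lambda/4$.

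Plugging this into the reduced volume inequality $\tilde V(1) \leq 1$, after restricting the defining integral to $B_{g(0)}(x_0, 1)$, produces
\[
(4\pi)^{-m/2} e^{-(\sigma_+/3 + \Lambda/4)}\, \Vol_{g(-1)}(B_{g(0)}(x_0, 1)) \;\leq\; 1.
\]
To pass from $g(-1)$ back to $g(0)$, I would use the volume-form evolution $\partial_t dV_{g(t)} = -R\, dV_{g(t)}$: integrated from $-1$ to $0$ with $R \geq -\sigma_-$ on $B_{g(0)}(x_0, 1)$, this gives $dV_{g(0)} \leq e^{\sigma_-}\, dV_{g(-1)}$ pointwise on the ball, contributing exactly the missing factor $e^{\sigma_-}$ and closing the estimate.

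The main point requiring care is the $\ell$-estimate: the reparametrization $\tau \mapsto \sqrt{\tau}$ is precisely what turns the otherwise divergent $\int_0^1 \tau^{-1/2} d\tau$ into the sharp coefficient $\Lambda/4$, by concentrating the motion near $\tau = 0$ where the $\sqrt{\tau}$-weight is small. The only geometric check is that $\gamma$ stays inside $B_{g(0)}(x_0, 1)$ so that the metric equivalence applies along the whole curve, which is automatic because $\gamma_0$ is a minimizing $g(0)$-geodesic of length at most $1$. Since $X$ is closed, the completeness and smoothness on $[-1, 0]$ needed for Perelman's monotonicity of $\tilde V$ are built in, so no further regularity work is required.
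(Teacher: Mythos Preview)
Your proof is correct and follows essentially the same route as the paper's: both use Perelman's reduced volume based at $(x_0,0)$, the test curve $\gamma(\tau)=\gamma_0(\sqrt{\tau})$ along a $g(0)$-minimizing geodesic, the metric equivalence $g(-\tau)\leq\Lambda g(0)$ to bound $|\gamma'|^2$, the monotonicity $\tilde V(\bar\tau)\leq 1$ to control $\Vol_{g(-\bar\tau)}$, and finally the volume-form evolution with $R\geq -\sigma_-$ to return to $g(0)$. The only cosmetic difference is that you rescale first to $r=1$, whereas the paper keeps $\bar\tau=r^2$ general throughout; the arithmetic and the resulting constant are identical.
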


\begin{proof}
  Fix $q \in B_{g(0)}(x_0,1)$, let $\gamma$ be a shortest geodesic
  connecting $q, x_0$ under the metric $g(0)$ with unit speed.
  Let $a$ be the length of $\gamma$.
  Define a curve in space-time:
  \begin{align*}
    & \Gamma: [0, \bar{\tau}] \mapsto X \times [-1, 0], \\
    & \Gamma(s)= \left(\gamma \left(\sqrt{\frac{s}{\bar{\tau}}}a \right), s \right).
  \end{align*}
  Following the notation of~\cite{Pe1}, we can bound the reduced distance. 
  \begin{align*}
    \mathcal{L}(\Gamma)&= \int_0^{\bar{\tau}} \sqrt{\tau}  \left(R(\Gamma(\tau)) +|\dot{\Gamma}(\tau)|^2 \right) d\tau\\
    &\leq \sigma_{+} \int_0^{\bar{\tau}} \sqrt{\tau} d\tau + \int_0^{\bar{\tau}} \frac{a^2}{4\bar{\tau}\sqrt{\tau}} |\dot{\gamma}|^2 d\tau\\
    &\leq \sigma_{+} \int_0^{\bar{\tau}} \sqrt{\tau} d\tau + \Lambda \int_0^{\bar{\tau}} \frac{a^2}{4\bar{\tau}\sqrt{\tau}} d\tau\\
    &\leq \frac{2}{3} \sigma_{+} \bar{\tau}^{\frac{3}{2}} + \frac{1}{2} \Lambda a^2 \bar{\tau}^{-\frac12}.\\
    \Rightarrow   L(q, \bar{\tau}) &\leq \mathcal{L}(\Gamma)
    \leq \frac{2}{3} \sigma_{+} \bar{\tau}^{\frac{3}{2}} + \frac{1}{2} \Lambda a^2 \bar{\tau}^{-\frac12}.\\
    \Rightarrow   l(q, \bar{\tau}) &\leq \frac{1}{2\sqrt{\bar{\tau}}} L(q, \bar{\tau}) \leq \frac13 \sigma_{+} \bar{\tau} + \frac14 \Lambda a^2 \bar{\tau}^{-1}.
  \end{align*}
  Let $\Omega = B_{g(0)}(x_0, \sqrt{\bar{\tau}})$. Clearly, $l(q, \bar{\tau}) \leq \frac{1}{3}\sigma_{+} \bar{\tau} + \frac{1}{4} \Lambda$ in $\Omega$.
  By the monotonicity of reduced volume, we have
  \begin{align*}
    (4\pi)^{\frac{m}{2}} \geq  \tilde{V}(0) \geq   \tilde{V}(\bar{\tau})
    =\bar{\tau}^{-\frac{m}{2}} \int_X e^{-l(q, \bar{\tau})} d\mu_q
    \geq \bar{\tau}^{-\frac{m}{2}}   e^{-\frac13 \sigma_{+} \bar{\tau}-\frac14 \Lambda } \Vol_{g(-\bar{\tau})}(\Omega),
 \end{align*}
 which implies
 $\displaystyle \Vol_{g(-\bar{\tau})}(\Omega) \leq  e^{\frac13 \sigma_{+} \bar{\tau}+\frac14 \Lambda}  (4\pi \bar{\tau})^{\frac{m}{2}}$.
 On the other hand, evolution of volume under Ricci flow yields
  \begin{align}
    &\quad \D{}{t} \Vol_{g(t)}(\Omega) = \int_{\Omega} (-R)  \leq \sigma_{-} \Vol_{g(t)}(\Omega), \nonumber \\
    &\Rightarrow   \Vol_{g(0)}(\Omega) \leq \Vol_{g(-\bar{\tau})}(\Omega) e^{\sigma_{-}\bar{\tau}}
    \leq e^{ (\frac13 \sigma_{+} + \sigma_{-}) \bar{\tau}+\frac14 \Lambda}  (4\pi \bar{\tau})^{\frac{m}{2}}. \label{eqn:g23_1}
  \end{align}
  Let $r=\sqrt{\bar{\tau}}$. Note that $\bar{\tau}=r^2 \leq 1$.   Proposition~\ref{prn:g23_1} follows from inequality (\ref{eqn:g23_1}).
 \end{proof}

 The idea of the previous proposition is quite simple: try to use reduced volume to estimate volume.  If we assume scalar curvature is uniformly bounded, it is not important
 to differentiate the volume at different time slices.  Therefore, the proof of Proposition~\ref{prn:g23_1} indicates the following observation.
 It is sufficient to prove the volume upper bound by finding a nonnegative function $w_r$ with the following properties.
 \begin{itemize}
   \item  $\int_{B_{g(0)}(x_0, r)} w_r d\mu_{g(0)} \leq C_1$.
   \item  $w_r \geq C_2^{-1} r^{-m}$ on the geodesic ball $B_{g(0)}(x_0, r)$.
 \end{itemize}
 In Proposition~\ref{prn:g23_1}, $e^{-l} \bar{\tau}^{-\frac{m}{2}}$, which is the Jacobi of reduced volume, plays the role of $w_r$ (without considering the difference of volume elements
 at different time slices).  Generally, if such $w_r$ exists, we have
   \begin{align}
     \Vol_{g(0)} \left(B_{g(0)}(x_0, r) \right) C_2^{-1} r^{-m} \leq C_1,
     \Rightarrow  \; \Vol_{g(0)} \left(B_{g(0)}(x_0, r) \right) \leq C_1 C_2 r^{m}.
   \label{eqn:g23_7}
   \end{align}
   Clearly, $e^{-l} \bar{\tau}^{-\frac{m}{2}}$ approaches the fundamental solution at $\bar{\tau} \to 0$, for both the operator $\square=\D{}{t}-\Delta$ and
   its conjugate operator $\square^*=\D{}{\tau} -\Delta + R$.  Since $R\bar{\tau} \to 0$, the extra term $R$ is not important when we study the limit behavior.
   Therefore, there are two candidates for $w_r$: the fundamental solution of $\square$ and the fundamental solution of $\square^*$.
   Inspired by the work of~\cite{CaZh}, we found that the fundamental solution of $\square$ works.
   The function $w_r$ can be chosen as the fundamental solution of $\square w=0$ based at the point $(x_0, -r^2)$.

\begin{proof}[Proof of Theorem~\ref{thmin:vupper}]
From the previous discussion, it suffices to prove the following properties.
  \begin{itemize}
    \item
    \begin{align}
     \int_{B_{g(0)}(x_0, r)} w(\cdot, 0) < \int_X w(\cdot, 0) \leq e^{\sigma_{-} r^2}.\label{eqn:g23_6}
    \end{align}
    \item   In the ball $B_{g(0)}(x_0, r)$, we have
    \begin{align}
      w(\cdot, 0) \geq (4\pi)^{-\frac{m}{2}} \cdot e^{-1-\frac{1}{3}\sigma_{+}r^2-2\sqrt{B+\left(\sigma_{-}+\frac{1}{3}\sigma_{+}\right) r^2+\frac{m}{2} \log \frac{4}{3}}} \cdot r^{-m}.
    \label{eqn:g23_8}
    \end{align}
  \end{itemize}
  Here $w$ is the fundamental solution of $\square w=0$ satisfying $\displaystyle \lim_{t \to -r^2} w(x, t) = \delta_{x_0}$.

  The first property follows from direct computation:
  \begin{align*}
  \qquad \frac{d}{dt} \left( \int w \right)=\int (\Delta w - Rw)= \int -Rw \leq \sigma_{-} \int w,
  \Rightarrow \left. \left(\int w\right)\right|_0 \leq  \left.\left(\int w\right)\right|_{-r^2} e^{\sigma_{-} r^2} =e^{\sigma_{-} r^2}.
 \end{align*}

The second inequality is more complicated. We first need to show that $w(x_0, 0)$ is comparable to $r^{-m}$, then we apply a gradient estimate to show that
$w(x, 0)$ is comparable to $r^{-m}$ in the geodesic ball $B_{g(0)}(x_0, r)$. Most of these estimates are available in the paper~\cite{CaZh}.
We only need some modifications to match our condition. For the completeness, we give a sketchy proof here.

\begin{claim}
  For every $t \in (-r^2, 0]$, we have
  \begin{align}
    \norm{w(t)}{L^{\infty}} \leq \left(4\pi \left(t+r^2 \right) \right)^{-\frac{m}{2}} \cdot e^{B+\sigma_{-}(t+r^2)}.
  \label{eqn:g23_5}
  \end{align}
  In particular, at $t=0$, we have
  \begin{align}
    \norm{w(0)}{L^{\infty}} \leq (4\pi)^{-\frac{m}{2}} \cdot e^{B+ \sigma_{-}r^2} \cdot r^{-m}.
  \label{eqn:infby1_g23}
  \end{align}
\label{clm:g23_1}
\end{claim}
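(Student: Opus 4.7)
The plan is to apply Davies' $L^p$--$L^\infty$ semigroup interpolation, using the hypothesis $\mu(g(s),\theta)\geq -B$ as a logarithmic Sobolev inequality with uniformly controlled constants. This is the adaptation to Ricci flow of Gross's classical method, and follows the broad outline of~\cite{CaZh}.

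First I would translate the $\mu$-bound to standard LSI form. Via the substitution $u^2=(4\pi\theta)^{-m/2}e^{-f}$, the inequality $\mu(g(s),\theta)\geq -B$ is equivalent to
\begin{align*}
\int_X u^2\log u^2\,dV_{g(s)} \leq 4\theta\int_X |\nabla u|^2\,dV_{g(s)}+\theta\int_X R\,u^2\,dV_{g(s)}+B-m-\frac{m}{2}\log(4\pi\theta)
\end{align*}
for every smooth $u$ with $\int u^2\,dV_{g(s)}=1$ and every $\theta\in(0,-s]$. Next, for a smooth exponent $p=p(s)$ on $[-r^2,t]$ with $p(-r^2)=1$ and $\lim_{s\to t^-}p(s)=\infty$, set $F(s):=\int_X w^{p(s)}\,dV_{g(s)}$. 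A direct computation using $\partial_s w=\Delta w$, $\partial_s dV_{g(s)}=-R\,dV_{g(s)}$, and integration by parts gives
\begin{align*}
F'(s) = -\frac{4(p-1)}{p}\int|\nabla w^{p/2}|^2\,dV+p'(s)\int w^p\log w\,dV-\int R\,w^p\,dV.
\end{align*}

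The crucial step is to feed the LSI, applied to $v=w^{p/2}/\sqrt{F}$, into this identity: multiplying the normalized LSI by $p'/p$ and making the LSI parameter $\theta(s)=(p-1)/p'(s)$, the $\int|\nabla w^{p/2}|^2$ contributions cancel exactly, and using $R\geq-\sigma_-$ on the leftover scalar-curvature term produces the clean differential inequality
\begin{align*}
\frac{d}{ds}\log\|w(\cdot,s)\|_{p(s)}\leq\frac{\sigma_-}{p(s)^2}+\frac{p'(s)}{p(s)^2}\Bigl(B-m-\frac{m}{2}\log(4\pi\theta(s))\Bigr).
\end{align*}

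Finally, I would take the specific schedule $1/p(s)=1-(s+r^2)/(t+r^2)$, which gives the constant ratio $p'/p^2\equiv 1/(t+r^2)$ and the symmetric $\theta(s)=(s+r^2)(t-s)/(t+r^2)$, making all the time integrals explicit. Integrating over $[-r^2,t]$, the log-Sobolev terms evaluate to $B-\frac{m}{2}\log(4\pi(t+r^2))$, the curvature term contributes at most $\sigma_-(t+r^2)$, and the initial value $\lim_{s\to(-r^2)^+}\log\|w(\cdot,s)\|_1=0$ (justified by mollifying $\delta_{x_0}$ and passing to the limit) closes the estimate. Sending $p(s)\to\infty$ as $s\to t^-$ yields (\ref{eqn:g23_5}). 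The principal obstacle is the bookkeeping: matching $p(s)$ and $\theta(s)$ so that the gradient terms cancel precisely and the resulting time integrals produce the Gaussian factor $(4\pi(t+r^2))^{-m/2}$ on the nose. The subsidiary technical points---regularizing the Dirac initial data and taking the $p\to\infty$ limit---are routine once the main identity is in hand.
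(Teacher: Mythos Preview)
Your proposal is correct and follows essentially the same approach as the paper: Davies' $L^p$--$L^\infty$ interpolation driven by the logarithmic Sobolev inequality coming from the $\mu$-functional lower bound, with the same choice $\theta=(p-1)/p'$ to cancel the gradient terms. The only cosmetic difference is that the paper first runs the argument on $[-r^2,0]$ with the schedule $p(t)=r^2/(-t)$ to prove \eqref{eqn:infby1_g23}, and then obtains the general \eqref{eqn:g23_5} by the time shift ``replace $r^2$ by $r^2+t$''; your schedule $1/p(s)=(t-s)/(t+r^2)$ is exactly the same choice written for a general endpoint $t$, and your $\theta(s)=(s+r^2)(t-s)/(t+r^2)$ indeed satisfies the admissibility constraint $\theta(s)\leq -s$ required by the hypothesis on $\mu$.
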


  Let $p(t)=\frac{r^2}{-t}, \; t \in [-r^2,0]$. Clearly $p(-r^2)=1, p(0)=\infty$.
  Direct computation shows
  \begin{align}
    \left(\log \norm{w}{L^p} \right)' &= -p^{-2}p' \log \left( \int w^p \right) + p^{-1}\left( \int w^p \right)^{-1} \left( \int w^p \right)', \nonumber \\
     &=-p^{-2}p' \log \left( \int w^p \right) + \int w^p \left( p' \log w + pw^{-1}w' -R \right).
     \label{eqn:k25_1}
  \end{align}
  Let $v$ be the normalization of $w$: $v=A^{-1}w^{\frac{p}{2}}$ where $A=\left( \int w^p \right)^{\frac{1}{2}}$.
  Then equation (\ref{eqn:k25_1}) reads
  \begin{align}
    \left(\log \norm{w}{L^p} \right)' &=p^{-2}p' \int v^2 \log v^2 + \int v^2 w^{-1}w'   -p^{-1}\int Rv^2, \nonumber \\
     &=p^{-2}p' \int v^2 \log v^2 - 4p^{-2}(p-1)\int \snorm{\nabla v}{}^2-p^{-1}\int Rv^2, \label{eqn:g23_2}
  \end{align}
  where we use the fact $w'=\Delta w$ in the deduction of (\ref{eqn:g23_2}).
  By the definition of $p(t)$, we have
  \begin{align*}
     p^{-2}p'=r^{-2},  \quad p^{-2}(p-1)= \frac{(t+r^2)|t|}{r^4}.
  \end{align*}
  Equation (\ref{eqn:g23_2}) can be simplified as
  \begin{align}
    r^2 \left(\log \norm{w}{L^p} \right)' &=\int v^2 \log v^2 -\frac{|t|(t+r^2)}{r^2} \int \left(4\snorm{\nabla v}{}^2+Rv^2\right) -\frac{t^2}{r^2} \int Rv^2 \nonumber\\
    &\leq B -m -\frac{m}{2} \log \left(\frac{4\pi|t|(t+r^2)}{r^2} \right) +\frac{t^2}{r^2} \sigma_{-}.
    \label{eqn:k25_5}
  \end{align}
  In the last step, we used  the following proposition which follows from the definition of Perelman's $\mu$-functional and our condition:
  For every $\theta \in (0,|t|]$ and every smooth function $\Psi$ with $\int_X \Psi^2=1$, we have
  \begin{align*}
    \int_X \Psi^2 \log \Psi^2 - \theta \left(R\Psi^2 + 4\snorm{\nabla \Psi}{}^2 \right) \leq B-m-\frac{m}{2} \log (4\pi \theta).
  \end{align*}
  Integrate inequality (\ref{eqn:k25_5}) on both sides, we have
  \begin{align*}
    r^2 \log \frac{\norm{w(0)}{L^{\infty}}}{\norm{w(-r^2)}{L^1}}
    &\leq (B-\frac{m}{2} \log (4\pi r^2))r^2 + r^4 \sigma_{-},
 \end{align*}
  which implies
  \begin{align*}
    \norm{w(0)}{L^{\infty}} \leq \norm{w(-r^2)}{L^1} (4\pi r^2)^{-\frac{m}{2}}  e^{B+ \sigma_{-}r^2}
    =(4\pi r^2)^{-\frac{m}{2}}  e^{B+ \sigma_{-}r^2}.
  \end{align*}
  This finishes the proof of (\ref{eqn:infby1_g23}).  Replacing $r^2$ by $r^2+t$, we obtain (\ref{eqn:g23_5}).
  Therefore, we finish the proof of Claim~\ref{clm:g23_1}. \\

  \begin{claim}
    \begin{align}
     w(x_0, 0) \geq  (4\pi)^{-\frac{m}{2}} \cdot e^{-\frac{1}{3} \sigma_{+} r^2} \cdot r^{-m}.
    \label{eqn:g23_3}
   \end{align}
  \label{clm:g23_2}
\end{claim}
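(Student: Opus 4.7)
The plan is to reduce the claim to a pointwise lower bound on the conjugate heat kernel by duality, and then to derive that bound from Perelman's differential Harnack inequality. Let $u$ denote the fundamental solution of the conjugate heat equation $\square^* u = -\partial_t u - \Delta u + Ru = 0$ based at $(x_0, 0)$, i.e.\ $\lim_{t \to 0^-} u(\cdot, t) = \delta_{x_0}$. Using $\frac{d}{dt} d\mu_{g(t)} = -R\, d\mu_{g(t)}$ and integration by parts on the closed manifold $X$, one checks that $\frac{d}{dt}\int_X uw\, d\mu_{g(t)} \equiv 0$; evaluating the common value at $t \to (-r^2)^+$ and at $t \to 0^-$ yields the identity
\[ w(x_0, 0) = u(x_0, -r^2), \]
so it suffices to bound $u(x_0, -r^2)$ from below.

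For this, set $\tau = -t$ and write $u = (4\pi\tau)^{-m/2} e^{-f}$. From $\square^* u = 0$ one computes
\[ \partial_\tau f = \Delta f - |\nabla f|^2 + R - \frac{m}{2\tau}, \]
and Perelman's differential Harnack for the fundamental solution of the conjugate heat equation furnishes the pointwise bound
\[ \tau\bigl(2\Delta f - |\nabla f|^2 + R\bigr) + f - m \leq 0. \]
Eliminating $\Delta f$ between these two relations and discarding the nonpositive term $-\tfrac{1}{2}|\nabla f|^2$ gives
\[ \partial_\tau f \leq \frac{R}{2} - \frac{f}{2\tau}, \qquad \text{equivalently,} \qquad \partial_\tau\bigl(\sqrt{\tau}\, f\bigr) \leq \frac{\sqrt{\tau}\, R}{2}. \]

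Evaluating this inequality along the constant curve $x \equiv x_0$, using $R(x_0, -\tau) \leq \sigma_+$, and noting the Euclidean-type asymptotic $\sqrt{\tau}\, f(x_0, \tau) \to 0$ as $\tau \to 0^+$, integration from $0$ to $r^2$ yields
\[ r\, f(x_0, r^2) \leq \frac{\sigma_+}{2}\int_0^{r^2}\sqrt{\tau}\, d\tau = \frac{\sigma_+ r^3}{3}, \]
hence $f(x_0, r^2) \leq \sigma_+ r^2/3$, and therefore $w(x_0, 0) = u(x_0, -r^2) \geq (4\pi)^{-m/2} e^{-\sigma_+ r^2/3}\, r^{-m}$, which is exactly~(\ref{eqn:g23_3}). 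The main obstacle is justifying Perelman's Harnack $v \leq 0$ for the singular fundamental solution $u$ and establishing $\sqrt{\tau}\, f(x_0, \tau) \to 0$ at $\tau \to 0^+$; both issues are treated in~\cite{CaZh} via approximation by regularized heat kernels, so the present proof need only indicate the minor adaptations to the hypotheses used here.
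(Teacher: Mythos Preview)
Your proof is correct and follows essentially the same route as the paper: pass to the conjugate heat kernel $u$ via the duality $w(x_0,0)=u(x_0,-r^2)$, write $u=(4\pi\tau)^{-m/2}e^{-f}$, and use Perelman's Harnack to obtain $\partial_\tau(\sqrt{\tau}f)\leq\tfrac{\sqrt{\tau}}{2}R$ along the constant curve $\gamma\equiv x_0$, then integrate using $\sqrt{\tau}f(x_0,\tau)\to 0$. The only cosmetic difference is that you re-derive this inequality from the pointwise bound $v\leq 0$ and the evolution of $f$, whereas the paper simply invokes Corollary~9.4 of~\cite{Pe1}; the content is the same.
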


  Let $u$ be the fundamental solution of $\left( \D{}{\tau} - \Delta + R \right) u=0$ such that $\displaystyle \lim_{t \to 0} u =\delta_{x_0}$, where $\tau=-t$.
  Since  $\int_X wu$ is a constant, it is easy to see that $w(x_0, 0)=u(x_0, r^2)$.  Therefore, it suffices to develop the lower bound of $u(x_0, r^2)$.

  Let $f=-\log u -\frac{m}{2} \log (4\pi \tau)$. Applying Perelman's Harnack inequality (Corollary 9.4 of ~\cite{Pe1}) for the fixed curve $\gamma(\tau) \equiv x_0$,
  we obtain
   \begin{align}
     \D{}{\tau}\left( \sqrt{\tau} f(x_0, \tau) \right) \leq \frac{\sqrt{\tau}}{2} R(x_0, -\tau) \leq \frac{\sqrt{\tau}}{2} \sigma_{+}, \label{eqn:g23_4}
  \end{align}
  Note that $f(x_0, \tau) \sim \frac{d_{g(0)}(x_0, x_0)}{\tau} =0$ as $t \to 0$. Integrating (\ref{eqn:g23_4}) yields
  $\displaystyle  f(x_0, r^2) \leq  \frac{1}{3} \sigma_{+} r^2$, which in turn implies (\ref{eqn:g23_3}) by the definition of $f$ and the fact that $u(x_0, r^2)=w(x_0, 0)$.
  So we finish the proof of Claim~\ref{clm:g23_2}.\\

  By gradient estimate of $w$, c.f. Theorem 3.3 of~\cite{Zhq2}, or Theorem 5.1 of~\cite{CaHa}, one has
  \begin{align}
  w(x, 0) \geq w(x_0, 0) e^{\frac{-d^2(x,x_0,0)}{4|t_0|} -\frac{d(x,x_0,0)}{\sqrt{|t_0|}} \cdot \sqrt{\log \frac{\sup_{X \times [t_0,0]} w}{w(x_0,0)}}},
  \label{eqn:g22_4}
  \end{align}
  for every $t_0 \in (-r^2, 0]$.  Let $t_0=-\frac{r^2}{4}$. When $x \in B_{g(0)}(x_0, r)$, by (\ref{eqn:g23_5}), (\ref{eqn:g23_3}), and (\ref{eqn:g22_4}), we obtain
  \begin{align*}
    w(x,0) &\geq w(x_0, 0) e^{-1-2\sqrt{B+\left(\sigma_{-}+\frac{1}{3}\sigma_{+}\right) r^2+\frac{m}{2} \log \frac{4}{3}}}\\
    &\geq (4\pi)^{-\frac{m}{2}} \cdot e^{-1-\frac{1}{3}\sigma_{+}r^2-2\sqrt{B+\left(\sigma_{-}+\frac{1}{3}\sigma_{+}\right) r^2+\frac{m}{2} \log \frac{4}{3}}} \cdot r^{-m}.
  \end{align*}
It follows that
\begin{align*}
  \Vol_{g(0)} \left( B_{g(0)}(x_0, r) \right)
  \leq (4\pi)^{\frac{m}{2}}
    \cdot  e^{1+ \left(\sigma_{-} +\frac{1}{3}\sigma_{+} \right)r^2+2\sqrt{B+\left(\sigma_{-}+\frac{1}{3}\sigma_{+}\right) r^2+\frac{m}{2} \log \frac{4}{3}}}
    \cdot r^m
  \leq C(m, \sigma_{-}, \sigma_{+}, B) r^m.
\end{align*}
\end{proof}

\begin{remark}
  Proposition~\ref{prn:g23_1} is purely local. Unfortunately, the metric bound condition is hard to obtain.   Theorem~\ref{thmin:vupper} is based on natural condition.
  However, we pay price by sacrificing the local property.  Both the upper bounds in Proposition~\ref{prn:g23_1} and Theorem~\ref{thmin:vupper} are not precise even if
  the Ricci flow is flat.  The requirement that $X$ is closed can be dropped if we have good bounds of decaying speed of fundamental solutions of $\square$ and $\square^*$
  at space infinity.
  \label{rmk:g24_1}
\end{remark}

\begin{proof}[Proof of Corollary~\ref{clyin:krfupper}]
Note that $g(t)$ satisfies the normalized Ricci flow equation $\displaystyle \D{}{t} g = -Ric + g$.
Without loss of generality, we assume $T \geq \log \frac{3}{2}$.
Define $\displaystyle \tilde{g}(s) =\left( 1- \frac{s}{2} \right) g\left(T+\log \left(1- \frac{s}{2} \right)^{-1} \right)$, $2\left(1-e^T \right) \leq s <2$.
Clearly, $\tilde{g}$ is an unnormalized Ricci flow solution with $\tilde{g}(0)=g(T)$.

Suppose $\sigma$ is the uniform bound of scalar curvature along $g(t)$, then $|\tilde{R}| \leq \sigma$ on $M \times [-1, 0]$. Moreover,
by the monotonicity and scaling property of the $\mu$-functional along the Ricci flow, we have
\begin{align*}
  \mu\left( \tilde{g}(t), \theta \right) &\geq \mu \left( \tilde{g} \left(2\left(1-e^T\right) \right), 2\left(e^T-1\right)+t+\theta \right)=\mu \left( e^T g(0), 2\left(e^T-1\right)+t+\theta \right)\\
    &=\mu \left( g(0), 2 + e^{-T} \left( t+\theta -2\right) \right),
\end{align*}
for every $t \in [-1, 0]$ and $\theta \in [0, -t]$. Note that
\begin{align*}
 & 0 \leq 2-3e^{-T} \leq  2+e^{-T} \left( t+\theta -2 \right) \leq 2-2e^{-T}<2, \\
 &\lim_{T \to \infty}  2+e^{-T} \left( t+\theta -2 \right)=2.
\end{align*}
Define $\displaystyle -B=\inf_{0<\tau \leq 2} \mu(g(0), \tau)$.  Actually, $-B$ can be chosen as close to $\mu(g(0), 2)$ as possible whenever
$T$ is very large.
Therefore, we have the following conditions:
\begin{itemize}
  \item  $-\sigma \leq \tilde{R} \leq \sigma$ on $M \times [-1, 0]$.
  \item  $\mu(\tilde{g}(t), \theta) \geq -B$ for every $t \in [-1, 0]$ and $\theta \in [0, -t]$.
\end{itemize}
Therefore, Theorem~\ref{thmin:vupper} yields the desired volume ratio upper bound under metric $\tilde{g}(0)=g(T)$.
\end{proof}

Another application of Theorem~\ref{thmin:vupper} is to study the extension of Ricci flow whenever scalar curvature is uniformly bounded.
Suppose $\left\{ (X^m, g(t)), -1 \leq t <0 \right\}$ is a Ricci flow solution on a closed manifold,  whose scalar curvature is uniformly bounded by $\sigma$.
Clearly, for every $t \in [-1, 0)$, $\theta \in (0, -t]$,  we have $\mu (g(t), \theta) \geq \mu\left( g(-1), 1+t+\theta \right)>-B$ for some constant $B$.
Therefore, since $\displaystyle \lim_{t \to 0} \Vol_{g(t)}(X) \geq e^{-\sigma} \Vol_{g(-1)}(X)>0$, Theorem~\ref{thmin:vupper}
can be applied to show that $\displaystyle \lim_{t \to 0} \diam_{g(t)}(X)>0$, which is a positive evidence of the problem
which claims that the Ricci flow can be extended whenever scalar curvature is uniformly bounded. This will be discussed in the future.

\vspace{1in}

 Xiuxiong Chen,  Department of Mathematics, University of
 Wisconsin-Madison, Madison, WI 53706, USA; xiu@math.wisc.edu\\

 Bing  Wang,   Department of Mathematics, Princeton University,
  Princeton, NJ 08544, USA; bingw@math.princeton.edu\\

\end{document}